\documentclass[11pt,a4paper,reqno]{amsart}

\usepackage{a4wide,color}

\usepackage[english]{babel}
\usepackage[utf8]{inputenc}

\usepackage{bbm,amsmath,amsthm,epsfig,latexsym,marvosym}
\usepackage{amsfonts}
\usepackage{bigints}
\usepackage{amsmath}
\usepackage{amssymb}
\usepackage{subfig}

\allowdisplaybreaks

\usepackage{esint,xfrac}
\usepackage[colorlinks=true,linkcolor=blue,citecolor=red]{hyperref}

\newcommand{\mres}{\mathbin{\vrule height 1.6ex depth 0pt width
0.13ex\vrule height 0.13ex depth 0pt width 1.3ex}}


\setcounter{tocdepth}{2}

\def\A{\mathbb A}

\def\R{\mathbb R}
\def\N{\mathbb N}
\def\C{\mathcal C}

\def\cal{\mathcal}

\def\AA{{\mathcal A}}

\def\E{{\cal E}}

\def\H{{\cal H}}

\def\L{{\cal L}}

\def\CC{{\mathbb C}}
\def\a{\alpha}

\def\e{\varepsilon}

\def\d{\mathrm{d}}
\def\div{\mathrm{div}}

\def\Om{\Omega}

\def\ov{\overline}

\def\Tan{\mathrm{Tan}}
{\left\lbrace\begin{array}{@{}l@{}}}%
{\end{array}\right.}
\def\BDdev{\mathrm{BD}_{\mathrm{dev}}}
\def\BD{\mathrm{BD}}
\def\BV{\mathrm{BV}}

\def\wt{{\rightharpoonup^*}\,}
\def\Id{{\rm Id}\,}
\def\loc{{\rm loc}}

\def\d{\, \mathrm{d}}

\def\dist{{\rm dist}}
\def\curl{{\rm curl}}

\def\00{{\bf 0}}
\def\dive{{\rm div}}

\DeclareMathOperator*{\spt}{spt}

\def\big{\bigskip}

\newtheorem{theorem}{Theorem}[section]

\newtheorem{proposition}[theorem]{Proposition}
\newtheorem{lemma}[theorem]{Lemma}

\newtheorem*{theorem*}{Theorem}

\theoremstyle{definition}
\newtheorem{remark}[theorem]{Remark}
\newtheorem{definition}[theorem]{Definition}

\numberwithin{equation}{section}
\numberwithin{figure}{section}

\pagestyle{plain}

\usepackage{color}


\title[Iterative blow-ups technique: a refinement]
{Iterative blow-ups for maps with bounded $\AA$-variation: a refinement, with application to $\BD$ and $\BV$.}

\author{Marco Caroccia}
\address{Dipartimento di matematica, Politecnico di Milano}
\email{marco.caroccia@polimi.it}

\author{Nicolas Van Goethem}
\address{Centro de Matemática e Aplicações Fundamentais, UniversI.de de Lisboa}

\email{vangoeth@fc.ul.pt}

\makeindex


\begin{document}
\maketitle
\begin{abstract}
    We refine the iterated blow-up techniques. This technique, combined with a rigidity result and a specific choice of the kernel projection in the Poincaré inequality, might be employed to completely linearize blow-ups along at least one sequence. We show how to implement such argument by applying it to derive affine blow-up limits for $\BD$ and $\BV$ functions around Cantor points. In doing so we identify a specific subset of points - called totally singular points having blow-ups with completely singular gradient measure $D p=D^s p$, $\E p=\E^s p$ - at which such linearization fails.
\end{abstract}
\tableofcontents
\section{Introduction}

In this paper we consolidate the iterated blow-up approach that has been developed in \cite{caroccia2019integral} based on the work \cite{de2019fine}. This approach consists of iteratively blowing up a function at a point, relying on the principle expressed in \cite[Theorem~14.16 ]{mattila1999geometry}, which can be summarized by Preiss' result that \textit{tangent measures to tangent measures are tangent measures} (see also Theorem \ref{thm:Mattila} below).  In other words, if $h$ is a blow-up of $u$ at $x$, and $g$ is a blow-up of $h$ at $y$, then, by means of this principle, we can deduce that $g$ must be a blow-up of $u$ at $x$.\\

This idea has already been successfully implemented in \cite{de2019fine} to obtain relaxation and in \cite{caroccia2019integral} to obtain integral representation results for variational functionals in the context of maps of bounded deformation in the spirit of \cite{bouchitte1998global}. For applications, see also \cite{ap3,ap2,ap0,ap1}. The aim of this paper is to establish a general framework in order to apply this technique to general first-order operators $\AA$. Note that the original technique introduced in \cite{de2019fine} deals with iterative blow-ups of the measure $\AA u$ ($\E u$ in that case), slightly different from the approach proposed here, which considers blow-ups of the function $u$. The main difference lies in the following fact: by considering blow-ups of $\AA u$ one can obtain relaxation and homogenization results for energies depending solely on $\AA u$. By focusing on the blow-ups of $u$, instead, it is possible to obtain relaxation and integral representation for energies depending on $\AA u$ \textbf{and} $u$, up to some extent (as done in \cite{caroccia2019integral}). To obtain this slightly more general result, some additional ingredients are, however, required: a rigidity result and the structure of the operator  \textit{projection onto the kernel} $\mathcal{R}:L_{loc}^1(\R^n;\R^m)\rightarrow \mathrm{Ker}(\AA)$, appearing in the Poincaré inequality (see Proposition \ref{prop:rindlerchar} and Lemma \ref{l:auxiliary} in the context of $\BD$). By rigidity result, we mean some general structure property implied by having constant polar $\frac{\d \AA u}{\d |\AA u|}$ on the convex set $K$. In some sense, since we need to keep track of the pointwise value of $u$ (to allow for energies depending on $\AA u$ and $u$) it is no surprise that we must gain some information on $u(x)$, encoded as information on the kernel projection $\mathcal{R}$, and the rigidity of blow-up structure at $x$. Once these two ingredients are at hand heuristically, by iteratively and alternatively blowing up, using rigidity and the kernel structure, we can find at least one affine  blow-up of $u$ at $|\AA u|$-a.e. point, out of a specific set identified as the totally singular points (cf. Definition  \ref{def:TSpoinBV}, \ref{def:TSpoinBD}). We show how to implement this scheme at Cantor points, in the $\BV$ and $\BD$ case as a refinement of the technique developed in \cite{caroccia2019integral} in Section \ref{sct:app}.  
In particular by relying upon Definition \ref{def:Strict} and Proposition \ref{lem:BB}, which embed the idea of iterative blow-ups into a solid general framework, we prove the following two facts. 
\begin{theorem*}[Affine blow-ups for $\BV$ functions - Theorem \ref{thm:mainBV}]
Let $n\geq 2$ and $u\in \BV(\Omega;\R^m)$. Let $K$ be a center-symmetric convex set.  Then for $|D^c u|$-a.e. $x\in \Omega\setminus \mathrm{TS}(u)$ there exists a vanishing sequence $ \e_i \downarrow 0 $ such that
 	\[
	u_{K,\e_i,x}(y)\rightarrow  \frac{\d D u}{\d |D u| } (x) y \ \ \text{strictly in $\BV(K;\R^n)$}.
	\]
\end{theorem*}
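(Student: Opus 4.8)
Let me think about how I would approach this. We have $u \in \mathrm{BV}(\Omega; \mathbb{R}^m)$, and we want to show that at $|D^c u|$-a.e. point $x$ outside the "totally singular" set $\mathrm{TS}(u)$, there's a sequence of blow-ups $u_{K,\epsilon_i,x}$ converging strictly in $\mathrm{BV}(K)$ to the affine function $y \mapsto \frac{dDu}{d|Du|}(x) \, y$.

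The key tools the paper has set up: (1) the iterative blow-up machinery encoded in "Definition \ref{def:Strict}" and "Proposition \ref{lem:BB}", which formalizes the Preiss principle (tangent measures to tangent measures are tangent measures, cf. Theorem \ref{thm:Mattila}); (2) a rigidity result for BV saying that if the polar $\frac{dDu}{d|Du|}$ is constant on a convex set $K$, then $u$ has a rigid structure; (3) the structure of the kernel projection $\mathcal{R}$ in the Poincaré inequality (Proposition \ref{prop:rindlerchar}), where for $\mathcal{A} = D$ the kernel is the constants and $\mathcal{R}$ is averaging. The set $\mathrm{TS}(u)$ is precisely where linearization fails.

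The plan:

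First, reduce to a good set of points. By standard results on BV functions (Alberti's rank-one theorem, and the differentiation theory of measures), at $|D^c u|$-a.e. point $x$ the blow-ups of $u$ exist along suitable sequences $\epsilon_i \downarrow 0$ and the polar $\frac{dDu}{d|Du|}(x)$ is well-defined; moreover at $|D^c u|$-a.e. point the measure $|Du|$ has a well-behaved upper/lower density behavior allowing the blow-up procedure to produce nontrivial limits. Restricting further to $\Omega \setminus \mathrm{TS}(u)$ removes the bad points by definition.

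Second, apply the iterative blow-up scheme (Proposition \ref{lem:BB}) to produce a first-generation blow-up $h$ of $u$ at $x$ that is itself a "rigid" object: since $x$ is a Cantor point and the polar is approximately constant, the rigidity result forces $h$ to have the form dictated by a single rank-one direction, i.e. $Dh = \frac{dDu}{d|Du|}(x) \, \mu$ for some scalar measure $\mu$ on $K$ with constant polar. In the BV/Cantor setting this means $h(y) = \phi(\langle y, \xi\rangle)\, \eta + c$ for a monotone-type one-dimensional profile $\phi$, direction $\xi$, amplitude vector $\eta$ (with $\eta \otimes \xi$ the prescribed polar), and a constant $c$ controlled by the kernel projection $\mathcal{R}$.

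Third, iterate: blow up $h$ at a second point $y$ chosen so that the one-dimensional profile $\phi$ is differentiable there (which happens $\mathcal{L}^1$-a.e., hence at $|Dh|$-a.e. point by the structure of $\mu$ — unless we are at a totally singular point, where $\phi' = 0$ a.e. and the profile is purely Cantor, which is exactly the obstruction encoded in $\mathrm{TS}(u)$). At a differentiability point the second blow-up $g$ is the affine map $g(z) = \phi'(\cdot)\, \eta \otimes \xi \, z + \text{const}$, and the constant is pinned down to zero (or to the right value) using the specific choice of kernel projection $\mathcal{R}$ that normalizes $g(0)$. By the Preiss principle (Theorem \ref{thm:Mattila} / Proposition \ref{lem:BB}), $g$ is also a blow-up of $u$ at $x$ along some sequence $\epsilon_i \downarrow 0$, and after rescaling the (positive) factor $\phi'$ — which is legitimate because blow-ups are defined up to the normalization built into $u_{K,\epsilon,x}$ — we get exactly $y \mapsto \frac{dDu}{d|Du|}(x)\, y$.

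Fourth, upgrade the convergence from weak-$*$ to strict in $\mathrm{BV}(K)$. Weak-$*$ convergence plus convergence of total variations $|Du_{K,\epsilon_i,x}|(K) \to |Du_{\text{affine}}|(K)$ gives strict convergence; the convergence of masses follows because the blow-up normalization is chosen (via $K$ being center-symmetric) so that $|Du_{K,\epsilon_i,x}|(K)$ is essentially constant along the sequence and matches the limit, together with the rigidity that prevents mass from concentrating on $\partial K$.

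The main obstacle will be the third step: controlling the constants through the two successive blow-ups and ensuring the kernel projection $\mathcal{R}$ genuinely forces the affine limit to pass through the origin with the correct slope, rather than merely being affine with an uncontrolled intercept. This is precisely where the interplay between the rigidity result and the structure of $\mathcal{R}$ (Proposition \ref{prop:rindlerchar}) is essential, and where the set $\mathrm{TS}(u)$ must be excised — at those points the one-dimensional Cantor profile never linearizes, so no amount of iteration produces an affine limit. A secondary technical point is verifying that the second blow-up point $y$ can be chosen in a set of full $|Dh|$-measure while simultaneously keeping $x \notin \mathrm{TS}(u)$; this requires knowing that the "bad" profiles are exactly characterized by the totally singular condition, so that outside $\mathrm{TS}(u)$ a.e. point of the first blow-up is a differentiability point of the profile.
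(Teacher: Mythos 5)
Your proposal follows essentially the same route as the paper's proof: Alberti's rank-one theorem at Cantor points, the one-dimensional rigidity of blow-ups with constant polar, exclusion of $\mathrm{TS}(u)$ to locate a profile with nonvanishing derivative, a second blow-up at such a point transferred back to $u$ via Proposition \ref{lem:BB}, the averaging projection $\mathcal{R}_K=(\cdot)_K$ to fix the additive constant, and the total-variation normalization built into the strict blow-up to fix the slope to $1/|\eta\otimes\xi|$. Only minor slips: the kernel-projection structure you cite as Proposition \ref{prop:rindlerchar} is the $\BD$ rigidity statement (for $\BV$ the relevant fact is simply that $\mathcal R_K$ is the mean), and the good second blow-up point need only be found in a set of positive Lebesgue (hence positive $|Dh|$) measure where the profile's derivative is nonzero, not at $|Dh|$-a.e.\ point.
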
  
\begin{theorem*}[Affine blow-up for $\BD$ functions - Theorem \ref{thm:blupSel}]\nonumber
    Let $u\in\BD(\Omega)$. Let $K$ be a center-symmetric convex set. Then for $|\E^c u|$-a.e. $x\in\Omega\setminus \mathrm{TS}(u)$ there exists a sequence $\e_i \downarrow 0$ such that
\[
u_{K,\e_i,x}(y)\rightarrow \frac{\d \E u}{\d |\E u|}(x)y \ \ \text{strictly in $\BD(K)$ }
\]
\end{theorem*}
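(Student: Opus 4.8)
The plan is to specialize the general iterative-blow-up framework encoded in Definition \ref{def:Strict} and Proposition \ref{lem:BB} to the operator $\AA=\E$, running the argument in exact parallel with the $\BV$ statement Theorem \ref{thm:mainBV}; the only $\BD$-specific inputs are the Poincaré--Korn inequality with its explicit projection $\mathcal R$ onto the kernel of $\E$ (infinitesimal rigid motions) and the rigidity dichotomy of Proposition \ref{prop:rindlerchar} and Lemma \ref{l:auxiliary}. First I would reduce to a ``good'' subset of $\Om\setminus\mathrm{TS}(u)$ of full $|\E^c u|$-measure: decomposing $\E u=\E^a u+\E^j u+\E^c u$ and using Besicovitch differentiation, I select points $x$ at which the polar $\xi:=\frac{\d\E u}{\d|\E u|}(x)$ exists, at which the jump part has vanishing density (so that blow-ups are atomless), and at which the strict-density condition of Definition \ref{def:Strict} holds. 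For such $x$, Proposition \ref{lem:BB} gives that the rescalings $u_{K,\e,x}$, after subtracting $\mathcal R u_{K,\e,x}$, are precompact in the strict topology of $\BD(K)$ and that every cluster point $h$ is a tangent map of $u$ at $x$ with $\frac{\d\E h}{\d|\E h|}\equiv\xi$ on $K$ and the normalized masses converging.

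Next I invoke the rigidity result on such an $h$. Constancy of the polar on the center-symmetric (hence bounded convex) set $K$ forces a dichotomy, as in Lemma \ref{l:auxiliary}: either $\xi$ is not a symmetric rank-one matrix, in which case $\E h$ must be absolutely continuous and therefore, being a constant multiple of $\xi$ times $\mathcal L^n$, $h$ is already the affine map $y\mapsto\xi y$ (after the rigid normalization) and we are done; or $\xi=a\odot b$ is symmetric rank one, in which case $h(y)=a\,\psi(b\cdot y)+r(y)$ with $r$ an infinitesimal rigid motion and $\psi\colon\R\to\R$ a continuous monotone one-dimensional $\BV$ function. In the second case the problem has been reduced to linearizing the scalar profile $\psi$.

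To linearize $\psi$ I iterate, using Preiss' principle. If $D\psi$ has a nontrivial absolutely continuous part, I blow $h$ up at a Lebesgue point of $\frac{\d\E^a h}{\d\mathcal L^n}$ (which by rigidity is a positive multiple of $\xi$); by Theorem \ref{thm:Mattila} the resulting limit is again a tangent map of $u$ at $x$, now with $\E(\text{limit})$ a constant multiple of $\xi$ times $\mathcal L^n$, hence the affine map $y\mapsto\xi y$. More generally, the assumption $x\notin\mathrm{TS}(u)$ is exactly what guarantees that after finitely many alternated blow-ups — each one producing, via Theorem \ref{thm:Mattila}, a new tangent map of $u$ at $x$ with the same polar $\xi$ — one reaches a tangent map whose symmetrized gradient is \emph{not} purely singular, at which point the previous paragraph applies; by Definition \ref{def:TSpoinBD}, the points where this never happens (every iterated blow-up has $\E p=\E^s p$) are precisely those we have excluded. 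Composing the finite chain of vanishing scales yields a single sequence $\e_i\downarrow0$; center-symmetry of $K$ makes $\mathcal R(\xi\,\cdot)=0$, so no residual rigid motion contaminates the limit, and the convergence is strict because constancy of the polar excludes cancellation, so $|\E u_{K,\e_i,x}|(K)\to|\xi|\,\mathcal L^n(K)=|\E(\xi\,\cdot)|(K)$ by the mass identity of Proposition \ref{lem:BB} matched with lower semicontinuity.

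I expect the main obstacle to be the bookkeeping of Step 3: one must set up a monotone quantity along iterated blow-ups (essentially the dimension carried by the one-dimensional profile together with its ``singularity level'') that strictly decreases, show that it stays positive at a.e. point of the current profile precisely outside $\mathrm{TS}(u)$ so that the iteration can always proceed, and check that the finite chain terminates at an affine map — all while tracking the kernel-projection normalizations so that the accumulated rigid motions do not destroy the affine limit. Making these three points precise is where the $\BD$ rigidity of Lemma \ref{l:auxiliary} and the explicit structure of $\mathcal R$ are indispensable, and is the technical core of the proof; the $\BV$ case of Theorem \ref{thm:mainBV} is formally identical with $a\odot b$ replaced by a rank-one matrix $a\otimes b$ and rigid motions replaced by constants.
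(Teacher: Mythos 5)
Your overall strategy (blow up once, apply rigidity, blow up again at a good point, transfer back via Proposition \ref{lem:BB}, and use the kernel projection plus strict convergence to identify the limit) is the right one, but two steps as written would fail. First, the rigidity input is misquoted: in the generic Cantor case $\eta\neq\pm\xi$, Proposition \ref{prop:rindlerchar}(i) does \emph{not} give $h(y)=a\,\psi(b\cdot y)+r(y)$ with a single scalar profile, but the two-profile form $h(y)=\a_1(y\cdot\eta)\xi+\a_2(y\cdot\xi)\eta+\mathrm{L}y+\mathrm{v}$. This is precisely the $\BD$-specific difficulty: the second blow-up at a point of approximate differentiability of $\a_1,\a_2$ produces $g(z)=\kappa_1(\eta\otimes\xi)z+\kappa_2(\xi\otimes\eta)z+C$ with two \emph{a priori unrelated} constants, and one must use the explicit formula \eqref{skew} for $\mathcal{R}_K$ on affine maps (Lemma \ref{l:auxiliary}) to see that $g-\mathcal{R}_K[g]=\kappa(\eta\odot\xi)z$ with $\kappa=\tfrac{\kappa_1+\kappa_2}{2}$, and then the mass normalization $|\E g|(K)=\L^n(K)$ forced by strict convergence to get $\kappa=1/|\eta\odot\xi|$. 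Your claim that the second blow-up is ``already the affine map $y\mapsto\xi y$'' skips exactly this bookkeeping, which is the core of the paper's computation (conditions (b)--(e) at the chosen point $y$, the estimate of $\mathrm{L}_K[\Psi_j(y+\rho_i\cdot)]$ in terms of $D\Psi_j(K_{\rho_i}(y))$, and the final identification of the skew part and of $\kappa$). Relatedly, your first dichotomy branch (``$\xi$ not symmetric rank one $\Rightarrow$ $\E h$ absolutely continuous $\Rightarrow$ $h$ affine'') is both irrelevant here --- at $|\E^c u|$-a.e.\ $x$ the polar \emph{is} $\frac{\eta\odot\xi}{|\eta\odot\xi|}$ by the De Philippis--Rindler structure theorem --- and unjustified as stated, since absolute continuity of $\E h$ with constant polar does not by itself make the density constant.

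Second, your Step 3 is based on a misreading of Definition \ref{def:TSpoinBD} and contains an admitted gap. Totally singular points are defined through the \emph{first-level} blow-ups only: $x\notin\mathrm{TS}(u)$ means there exists an $L^1$-limit point $h$ of $\{u_{K,\e,x}\}$ with $e(h)\L^n\neq 0$. Hence no ``finitely many alternated blow-ups'', no monotone complexity quantity, and no termination argument are needed: exactly one further blow-up of $h$, at a point $y$ where $\a_1'+\a_2'\neq 0$ (available since $e(h)\neq0$), already yields the affine limit, and Proposition \ref{lem:BB} says $g-\mathcal{R}_K[g]$ is reached as a strict limit of $u_{K,\e_i,x}$ along $K$ itself. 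Your proposed induction, besides not matching the definition of $\mathrm{TS}(u)$ (which says nothing about iterated blow-ups), is left unproved at its key point --- you acknowledge that the strictly decreasing quantity and the finiteness of the chain are missing --- so as written the argument does not close. A smaller inaccuracy: the $L^1$-precompactness of the blow-up family and the nonemptiness of $\mathrm{bu}_K$ come from the Poincar\'e inequality \eqref{Poincare} together with Theorem \ref{thm:cmp} and Lemma \ref{lem:BUnotEmpty}, not from Proposition \ref{lem:BB}, whose role is only to promote blow-ups of blow-ups to blow-ups of $u$ at $x$.
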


 Here, $u_{K,x,\e_i}$ is the typical blow-up sequence considered when dealing with homogenization problems
 \[
 u_{K,\e,x}(y):=\frac{u(x+\e y) -\mathcal{R}_K[u(x+\e \cdot)](y)}{\frac{|\mathcal{A} u|(K_{\e}(x) )}{|K|\e^{n-1}}},
 \]
 where $\AA=\E$ in the $\BD$ case, $\AA=D$ in the $\BV$ case, and $\mathcal{R}_K$ is a specific linear bounded operator (related to $K$), mapping $L^1$ onto $\mathrm{Ker}(\AA)$. The set $\mathrm{TS}(u)$ is a specific set of points, called \textit{totally singular points}, defined for $\BV$ (Definition \ref{def:TSpoinBV}) and for  $\BD$ (Definition \ref{def:TSpoinBD}) as those points for which all the blow-ups are given by functions $h$  having only the singular part in their gradient (i.e. $D^s h = D h$ or $\E h=\E^s h$ for the symmetric gradient). The exclusion of these points is due to the following fact. Heuristically, we eliminate the singular part of $\E u$ ($Du$ in the $\BV$ case) in the final blow-up by iteratively blowing up at absolutely continuous points of each new blow-up, ending with an affine function. If we try to apply this idea at a totally singular point, after the first blow-up we can no longer find a point of absolute continuity where $\psi'(x)\neq 0$ to perform a second blow-up (since $D\psi = D^s\psi$). While a complete linearization is possible only outside of $\mathrm{TS}(u)$, we underline the general statements \cite{de2019fine}, \cite[Proposition 4.6]{caroccia2019integral}  for $\BD$ that allow to linearize at least one direction for $|\E^c u|$-a.e. $x\in \Omega$. Thus, at totally singular points, the process must stop, and we are left with one-dimensional blow-ups with a vanishing absolutely continuous part. Of course further blow-ups can follow, with a suitable application of Proposition \ref{lem:BB} if, for the blow-up $h$, it holds $e(h)\L^n\neq 0$. We also point out that in \cite{caroccia2019integral} the linearization is achieved for specific convex sets $P$ but with this refined technology it can be checked that \cite[Proposition 4.6]{caroccia2019integral} can be proven for any general $K$. \\  

As we briefly introduce at the beginning of Section \ref{sct:app}, we consider the iterative blow-up strategy to be a fruitful and robust scheme for tackling homogenization and relaxation challenges.\\

The paper is organized as follows. In Section \ref{sct:Prel} we introduce the main notations and we retrieve, in a survey-like presentation, the results in literature on general first-order differential operator $\AA$ and the space of bounded $\AA$-variation. In Section \ref{sct:it}, we define and elaborate on the set of \textit{strict blow-up limit} (Definition \ref{def:Strict}) $\mathrm{bu}_K(u,x)$, we prove that is never empty and that \textit{strict blow-ups of blow-ups are strict blow-ups} (i.e. Proposition \ref{lem:BB}). Finally in Section \ref{sct:app}, we apply the iterative blow-up technique to obtain affine blow-ups for $\BD$ and $\BV$ functions. 

\section{Preliminaries}\label{sct:Prel}
We collect here some preliminary results from literature that will be used in the sequel, together with the setting of the notation used in rest of the paper.
\subsection{General notations}
The letter $n$ will always denote the ambient Euclidean space dimension. We will denote by $B_r(x)$ the ball of radius $r$ and centered at $x$. Whenever $x=0$ we just write $B_r$, as well as in the case $r=1$ when we simply write $B(x)$. More in general, given a convex body $K$ we denote by $K_{r}(x):=x+rK$. We denote by $\mathbb{M}^{m\times n}$ the set of $n\times n$ matrices. The notation $e_i$ stands for the $i$-th vector of the canonical basis of $\R^n$, $\Id$ denotes the $n\times n$ identity matrix. The notation $\L^n$, $\H^{n-1}$ stand for the $n$-dimensional Lebesgue measure and the $(n-1)$-dimensional Hausdorff measure on $\R^n$ while $\mathcal{M}(E;V)$ is the space of all finite $V$-valued Radon measures on $E$. The space $\text{Lin}(X;Y)$ denotes the family of all linear maps between the two  vector spaces $X$ and $Y$. \\


  Let then $\{A_j\}_{j=1}^n \subset \mathrm {Lin}(\R^m,V)$, for some Euclidean space $V$, and define the \textit{first order linear operator with constant coefficient} $\AA: C^1(\R^n;\R^m) \rightarrow C^0(\R^n; V)$ to be:
  \begin{align*}
   \AA u(x):=\sum_{j=1}^n A_j \partial_j u (x).
  \end{align*}
To such an operator, and for each $\xi\in \R^n$, we associate the \textit{symbol} $\A[\xi]:  \R^m \rightarrow V$
\begin{align}
   \A[\xi]\eta :=\sum_{j=1}^n \xi_j A_j \eta.\label{AAdef}
  \end{align}
  We may use the intuitive notation introduced in \cite{breit2017traces}:
  $$
   \A[\xi]\eta=\eta\otimes_\A \xi,
  $$
  where we have defined the bi-linear map $\otimes_\A(\eta,\xi)=\eta\otimes_\A\xi=\A[\xi]\eta$. Observe that, formally for $u\in C^1(\R^n;\R^m) $ and since $D=(\partial_1, \ldots,\partial_n)$,
\[
\AA u(x)= \A[D]u.
\]
  \subsection{Maps of bounded $\AA$-variation}
Now we define the set of functions with bounded $\AA$-variation as the functional space (still adopting the notation in \cite{breit2017traces})
    \[
   \BV^{\AA}(\Omega;\R^m):= \left\{\left. u\in L^1(\Omega;\R^m) \ \right| \ \AA u\in \mathcal{M}(\Omega;V)\right\},
    \]
    where $\mathcal{M}(\Omega;V)$ is the set of $V$-valued finite Radon measures (see \cite{ambrosio2000functions}), and with the distributional $\AA$-gradient defined as the following measure:
    \[
    \int_\Om\varphi \cdot \d\AA u:=\int_\Om \AA^*\varphi\cdot u \d x, \ \forall\  \varphi\in C^{\infty}_c(\Om;V),
    \]
    where the $L^2$-adjoint operator $\AA^*:C^1(\R^n;V)\rightarrow C^0(\R^n;\R^m)$ is defined, starting from $\{A_j^*\}_{j=1}^n \subset \mathrm{Lin}(V;\R^m)$, and with $A_j z \cdot \eta =z \cdot A_j^* \eta$ , for all $z\in \R^m,\eta \in V$, as 
    \[
    \AA^*v=-\sum_{j=1}^n A^*_j \partial_jv .
    \]
    
 As classical examples, the spaces of function of bounded variations $\BV(\R^n;\R^m)=\BV^D(\R^n;\R^m)$ (with $\AA u=Du=\mathbb A[D]u=u\otimes D$; $\AA^*\varphi=-\div\varphi$ and $V=\mathbb R^{m\times n}$), the spaces of function of \textit{bounded deformations} $\BD(\R^n)=\BV^{\E}(\R^n;\R^n)$ with 
 \[
\E u= \frac{1}{2}\left(Du+D^Tu\right)
 \]
 (with $\E  u=\mathbb E[D]u:=u\otimes_\E D$;  $\E^*\varphi=-\div\varphi$  and $V=\mathbb R^{n\times n}_\mathrm{sym}$)
and the space of \textit{bounded deviatoric deformations} $\BDdev(\R^n)=\BV^{\E_d}(\R^n;\R^n)$ with
\[ 
\E_d u= \E u-\frac{\dive(u)}{n} \Id
\]
(with $\E_d u=\mathbb E_d[D]u:=u\otimes_{\E_d} D $; $\E_d^*\varphi=-\div\varphi$ and $V=\mathbb R^{n\times n}_{\mathrm{sym}_0}$).
\smallskip
For references about these spaces we refer to \cite{ambrosio1997fine,ambrosio2000functions,arroyo2019fine,ebo99b,kohn1980new, rindler2018calculus,TS2}. \\

Let us now introduce some crucial concepts in order to deepen the properties of these operators.

\subsubsection{Ellipticity and Cancelling properties}
\begin{definition}[Elliptic]
We say that the operator $\AA$ is \textit{elliptic} (or $\R$-elliptic) if for any $\xi\in \R^n\setminus \{0\}$ the map $\A[\xi]: \R^m \rightarrow V$ is one-to-one. Equivalently, if and only if for all non-zero $\xi$ there exists a constant $c=c(n,\AA)>0$ such that 
\begin{align}\label{elliptic}
   |\A[\xi]\eta|\geq c|\xi||\eta|, \quad \forall \xi\in \R^n, \eta\in V.
\end{align} 
\end{definition}

Intuitively ellipticity means that the equation $\AA u=f$ is left-invertible (i.e., has a unique solution) in Fourier space \cite{gmeineder2017critical}. As a consequence of the celebrated work by Calderón and Zygmund \cite{Calderon1952} we have that $\AA$ is ($\R$-)elliptic if and only if for each $1<p<\infty$ there exists $c=c(p,n,\AA)$ such that coercivity is in force:
\begin{align}\label{elliptic2}
\|Du\|_{L^p(\R^n;\mathbb M^{m\times n})}\leq c\|\AA u\|_{L^p(\R^n;V)},
\end{align}
for all $u\in C_c^\infty(\Om;V)$.\\

Inequality \eqref{elliptic2} is well-known to be false for $p=1$ (cf. \cite{ornstein1962non}) but under a stronger assumptions on $\AA$  we can infer a Poincaré-type inequality and a Sobolev-type inequality. 

\begin{definition}[Cancelling]
We say that $\AA$ is cancelling if
    \begin{align}
  \bigcap_{\xi \in \R^n\setminus\{0\} } 
\mathrm{Im}(\A[\xi])=\{
0\}.\label{cancellation}
\end{align}
\end{definition}
To the knowledge of the authors, the first contribution on the $L^p$-differentiability results of $BV^\mathcal A$-maps under ellipticity and
cancellation (not necessarily $\CC$-elliptic) properties can be found in \cite{Raita1}.

For an elliptic and canceling first order linear operator  we have the following Theorem.
\begin{theorem}[Gagliardo-Nirenberg-Sobolev, see \cite{vanSchaftingen2013}]\label{thm:GaNiSob}
  If $\AA$ is an elliptic and canceling first-order linear operator then there exists a $c>0$, depending on $n$ and $\A$ only, such that  
  \[
\|u\|_{L^{\frac{n}{n-1}}}\leq c \|\AA u\|_{L^1 } \ \ \ \text{for all $u\in C^{\infty}_c(\R^n;\R^m)$}.
\]
\end{theorem}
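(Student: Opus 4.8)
The plan is to follow the proof of \cite{vanSchaftingen2013}: I would use ellipticity, through classical Calder\'on--Zygmund theory, to reduce the inequality to an endpoint negative-Sobolev estimate at the exponent $n/(n-1)$, and then close that estimate by invoking the cancelling property via a Bourgain--Brezis-type $L^1$ bound.

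First I would set up the reduction. Since $\AA$ is elliptic, for $\xi\neq 0$ the symmetric map $\A[\xi]^*\A[\xi]:\R^m\to\R^m$ is invertible, so $\mathbf m(\xi):=-i\,(\A[\xi]^*\A[\xi])^{-1}\A[\xi]^*$ is a matrix-valued symbol, smooth on $\R^n\setminus\{0\}$ and homogeneous of degree $-1$. Taking Fourier transforms in $\AA u=\sum_j A_j\partial_j u$ gives $\widehat{\AA u}(\xi)=i\,\A[\xi]\widehat u(\xi)$, hence $\widehat u(\xi)=\mathbf m(\xi)\widehat{\AA u}(\xi)$, i.e. $u=\mathbf m(D)\,\AA u$ for $u\in C^\infty_c(\R^n;\R^m)$. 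Writing $\mathbf m(D)=T\circ|D|^{-1}$ with $T$ the operator of symbol $\xi\mapsto\mathbf m(\xi)|\xi|$, which is smooth and $0$-homogeneous hence a Mihlin multiplier bounded on $L^p$ for every $1<p<\infty$, and taking $p=n/(n-1)$, one gets
\[
\|u\|_{L^{n/(n-1)}}\;\le\;C\,\big\|\,|D|^{-1}\AA u\,\big\|_{L^{n/(n-1)}}\;=\;C\,\|\AA u\|_{\dot{W}^{-1,\,n/(n-1)}}.
\]
It then remains to prove the endpoint bound $\|\AA u\|_{\dot{W}^{-1,\,n/(n-1)}}\le C\,\|\AA u\|_{L^1}$, which is false for a generic $L^1$ field (already for an approximate identity concentrating to a Dirac mass): this is where cancellation is needed.

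The key point is that $f:=\AA u$ satisfies a hidden differential constraint. Using ellipticity, I would clear denominators in the rational, $0$-homogeneous projection $\xi\mapsto\mathrm{Id}_V-P(\xi)$ onto $(\mathrm{Im}\,\A[\xi])^{\perp}$, where $P(\xi)=\A[\xi](\A[\xi]^*\A[\xi])^{-1}\A[\xi]^*$, to produce a homogeneous polynomial symbol $\widetilde L$ with $\widetilde L[\xi]\,\A[\xi]\equiv 0$ and $\ker\widetilde L[\xi]=\mathrm{Im}\,\A[\xi]$ for every $\xi\neq 0$; letting $L$ be the associated constant-coefficient operator, this gives $Lf=L\AA u=0$ in $\mathcal D'$. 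Since $\bigcap_{\xi\neq 0}\ker\widetilde L[\xi]=\bigcap_{\xi\neq 0}\mathrm{Im}\,\A[\xi]$, the cancelling condition \eqref{cancellation} is \emph{exactly} the assertion that $L$ is cocancelling. Then I would invoke the Bourgain--Brezis--Van~Schaftingen inequality for cocancelling operators: if $L$ is cocancelling, then every $f\in L^1(\R^n;V)$ with $Lf=0$ in $\mathcal D'$ satisfies $\|f\|_{\dot{W}^{-1,\,n/(n-1)}}\le C\,\|f\|_{L^1}$, equivalently $|\langle f,\varphi\rangle|\le C\,\|f\|_{L^1}\,\|\nabla\varphi\|_{L^n}$ for all $\varphi\in C^\infty_c$. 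Applying this with $f=\AA u$ and combining with the reduction above concludes the proof.

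I expect the main obstacle to be this cocancelling $L^1$ estimate. Being an endpoint ($p=1$) statement it lies outside the reach of Fourier-multiplier or interpolation methods and requires a genuine real-variable argument: a dyadic/Littlewood--Paley decomposition of the test field $\varphi$ combined with a Gagliardo-type selection on cubes, in which the constraint $Lf=0$ is used scale by scale to convert the $L^1$ mass of $f$ into $L^n$-control of $\varphi$ (this is the heart of the Bourgain--Brezis method, extended to higher-order cocancelling operators by Van Schaftingen). A secondary, purely algebraic point --- again exploiting ellipticity, which makes $\mathrm{Im}\,\A[\xi]$ a rank-$m$ subbundle of $V$ over $\R^n\setminus\{0\}$ --- is producing the annihilator $L$ with $\ker\widetilde L[\xi]=\mathrm{Im}\,\A[\xi]$ away from the origin, so that \eqref{cancellation} transfers faithfully to cocancellation of $L$.
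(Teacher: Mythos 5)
The paper does not prove Theorem \ref{thm:GaNiSob} at all: it is imported verbatim from \cite{vanSchaftingen2013}, so there is no internal argument to compare with, and the relevant benchmark is the proof in that reference. Your outline reproduces it faithfully and correctly in structure: ellipticity gives the symbol inverse $\mathbf m(\xi)=-i(\A[\xi]^*\A[\xi])^{-1}\A[\xi]^*$, homogeneous of degree $-1$, so that $u=\mathbf m(D)\AA u$ and a Mihlin multiplier argument reduces the claim to an endpoint negative-Sobolev bound for $f=\AA u$; clearing denominators in $\mathrm{Id}_V-P(\xi)$ produces a polynomial annihilator $L$ with $\ker \widetilde L[\xi]=\mathrm{Im}\,\A[\xi]$ for $\xi\neq 0$, so that \eqref{cancellation} is precisely cocancellation of $L$; and the Bourgain--Brezis--Van Schaftingen estimate for cocancelling constraints closes the argument. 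Two minor points deserve care if you were to write this in full: (i) the passage from the pairing estimate $|\langle f,\varphi\rangle|\le C\|f\|_{L^1}\|\nabla\varphi\|_{L^n}$ (stated for $\varphi\in C^\infty_c$) to the norm bound in $\dot W^{-1,n/(n-1)}$ requires a duality and density step, since the test functions arising as $|D|^{-1}$ or $\mathbf m(D)^*$ of $L^n$ data are neither compactly supported nor bounded, and (ii) the multiplier step needs $n\ge 2$ so that $p=n/(n-1)\in(1,\infty)$, consistent with the setting of the paper. With those caveats the proposal is a correct reconstruction of the cited proof, with the genuinely hard analytic content (the cocancelling $L^1$ estimate) correctly identified and attributed rather than reproved.
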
        
It is not easy to localize Theorem \ref{thm:GaNiSob} to general bounded open set. In this sense the best result available in literature (see Proposition \ref{prop:sobBalls}) requires the introduction of the $\CC$-ellipticity, a strong property that allows to develop all main variational tools on $\BV^\AA$.

\begin{definition}[$\mathbb{C}$-Elliptic]
We say that the operator $\AA$ is $\mathbb{C}$-\textit{elliptic} if for any $\xi\in \mathbb{C}^n\setminus \{0\}$ the complexification of its symbol $\A[\xi]: \mathbb{C}^m \rightarrow V+iV$ (being $i$ the imaginary unit) is one-to-one. 
\end{definition}

Obviously a $\mathbb{C}$-elliptic operator is also elliptic.  In \cite[Theorem 2.6]{breit2017traces} is shown that $\CC$-ellipticity is equivalent to the property of finite-dimensionality of the kernel of $\AA$, i.e. $\mathrm{dim}\{v\in\mathcal D'(\Omega;V): \AA v=0\}<\infty$. This makes easy to verify, for instance, that $D,\E$ are $\CC$-elliptic as well as to verify that $\E_d$ is \textbf{not} $\CC$-elliptic in $n=2$ (whereas it is in bigger dimension).  \\

As a matter of fact, many important operators are not $\CC$-elliptic, as for instance the curl (of order $1$) or the incompatibility operator (of order $2$), since their kernels, consisting of all gradients of scalar function (for the curl), and of all symmetric gradients of vector functions (for the incompatibility \cite{MSVG2015}), have not finite dimension. However note that operators $\curl$ and $\mathrm{inc}$ may verify  elliptic-like properties, in the sense that \eqref{elliptic2} holds for some specific Sobolev spaces $V$ (cf. \cite[Lemma 7]{SVG2016} for the curl and \cite[Theorem 3.9]{AVG2016} for the incompatibility).\\

Finally, $\CC$-ellipticity allows one to localize Theorem \ref{thm:GaNiSob} to obtain the following Poincaré-Sobolev inequality. In the following, $\Pi^{U}_{\mathrm{Ker} }:L^1(U;\R^m)\rightarrow \mathrm{Ker}(\mathcal{A})$ stands for a bounded linear projection operator onto the kernel of $\AA$, denoted as $\mathrm{Ker}(\mathcal{A})$.
\begin{proposition}[Poincaré-Sobolev inequality]\label{prop:sobBalls}
Let $\AA$ be a $\CC$-elliptic first order differential operator with constant coefficients. Let $K$ be a center-symmetric convex set. Then there exists a constant $c$ depending on $n$ and $K$ only such that
\begin{align}\label{SGNforA}
      \|u-\Pi^{K_r(x)}_{\mathrm{Ker} }u\|_{L^\frac{n}{n-1}(K_r(x);\R^m)}\leq c |\AA u|(\overline{K_r(x)}) \ \ \ 
\end{align}
for all $x\in \R^d$, $r>0$ and $u\in \BV_{loc}^\AA(\R^n;\R^m)$. 
\end{proposition}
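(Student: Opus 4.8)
The plan is to reduce the localized, scaled statement to the global Gagliardo--Nirenberg--Sobolev estimate of Theorem~\ref{thm:GaNiSob} by a combination of scaling, translation, and an extension/smoothing argument. First I would observe that both sides of \eqref{SGNforA} behave consistently under the rescaling $u \mapsto u(x + r\,\cdot)$: if $v(y) := u(x+ry)$ then $\AA v = r\,(\AA u)(x+r\,\cdot)$ as measures, and a change of variables shows $|\AA v|(\overline{K}) = r^{1-n}\,|\AA u|(\overline{K_r(x)})$, while the $L^{n/(n-1)}$ norm on $K_r(x)$ rescales by the reciprocal factor. Choosing $\Pi^{K}_{\mathrm{Ker}} v$ to be the pullback of $\Pi^{K_r(x)}_{\mathrm{Ker}} u$ (legitimate since $\mathrm{Ker}(\AA)$ is scaling-and-translation invariant — here $\CC$-ellipticity guarantees this kernel is finite-dimensional, hence well-behaved), it suffices to prove the inequality for $x=0$, $r=1$, i.e. on the fixed convex body $K$.

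\smallskip

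Next I would reduce to the smooth case. Given $u \in \BV^{\AA}_{loc}$, mollify: $u_\delta := u * \rho_\delta$ is smooth, $\AA u_\delta = (\AA u)*\rho_\delta$, and $|\AA u_\delta|(\overline{K'}) \le |\AA u|(\overline{K})$ on slightly larger bodies $K' \supset \overline{K}$ in the limit (more carefully, one works on a slightly enlarged convex body and then lets it shrink to $K$, using outer regularity of $|\AA u|$ so that the mass on $\overline{K_r(x)}$ appears as the limiting constant). The operator $\Pi^{K}_{\mathrm{Ker}}$ is bounded and linear, and $u_\delta - \Pi^K_{\mathrm{Ker}} u_\delta \to u - \Pi^K_{\mathrm{Ker}} u$ in $L^1(K)$; combined with a uniform $L^{n/(n-1)}$ bound this passes to the limit by lower semicontinuity of the norm. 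So it is enough to prove
\[
\|w - \Pi^K_{\mathrm{Ker}} w\|_{L^{n/(n-1)}(K;\R^m)} \le c\,\|\AA w\|_{L^1(K;V)}
\]
for smooth $w$.

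\smallskip

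The core step, and the one I expect to be the main obstacle, is getting from the \emph{whole-space} Sobolev inequality of Theorem~\ref{thm:GaNiSob} (which needs compactly supported test functions on all of $\R^n$) to an estimate on the bounded convex body $K$ with the kernel-projection subtracted. The standard route is via a bounded extension operator: one needs a linear operator $E : \BV^{\AA}(K;\R^m) \to \BV^{\AA}(\R^n;\R^m)$, supported in a fixed neighborhood of $K$, with $\|\AA(Ew)\|_{L^1(\R^n)} \le C\,\|\AA w\|_{L^1(K)}$. Such extension operators for $\BV^{\AA}$ on nice domains (and convex bodies are Lipschitz domains) are available precisely because $\AA$ is $\CC$-elliptic; this is the reason $\CC$-ellipticity, and not mere ellipticity and cancellation, is invoked in the statement. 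Subtracting $\Pi^K_{\mathrm{Ker}} w$ is what makes the extension's $\AA$-energy controlled by the \emph{seminorm} $\|\AA w\|_{L^1}$ alone (otherwise one would pick up an $\|w\|_{L^1}$ term, by the usual mechanism in Poincaré-type estimates: either a Poincaré inequality $\|w - \Pi w\|_{L^1(K)} \le C\|\AA w\|_{L^1(K)}$ feeding into the extension bound, or a compactness/contradiction argument using finite-dimensionality of $\mathrm{Ker}(\AA)$ to absorb the lower-order term). Once $Ew$ is in hand with these bounds, applying Theorem~\ref{thm:GaNiSob} to a mollification of $Ew$ and restricting to $K$ yields the claim, with $c$ depending only on $n$ and the geometry of $K$ (the extension constant, in turn, depending only on $K$ since all center-symmetric convex bodies are bi-Lipschitz to the ball with controlled constants relative to $K$ itself). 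I would flag that the cleanest reference-based shortcut is to cite the known localized Poincaré--Sobolev inequality for $\CC$-elliptic operators on convex sets (e.g. from the Breit--Diening--Gmeineder line of work), noting only that the dependence of $c$ on $K$ alone follows from the scaling normalization carried out in the first step.
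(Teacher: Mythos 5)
Your proposal is correct and follows essentially the same route as the paper, which in fact gives no proof of its own but only a remark citing \cite[Proposition 2.5]{gmeineder2017critical} for $K=B$ and observing that the $\mathrm{W}^{\AA,1}$-extension operator of \cite{GmeinRaita2019} works verbatim on general convex (indeed Jones) domains --- precisely the extension-plus-global-Gagliardo--Nirenberg--Sobolev mechanism you describe, with the dependence of $c$ on $n$ and $K$ alone handled by your scaling and translation normalization. The only point worth making explicit is that $\CC$-ellipticity implies both ellipticity and cancellation, so Theorem \ref{thm:GaNiSob} is indeed applicable to the (cut off, mollified) extension.
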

\begin{remark}
 We underline that the result of Proposition  \ref{prop:sobBalls} is present in literature, in \cite[Proposition 2.5]{gmeineder2017critical} in the case $K=B$. By carefully looking at the proof, the argument relies upon the extension operator $E: W^{\mathcal{A},1}(B_r(x))\rightarrow W^{\mathcal{A},1}(\R^d)$ built in \cite{GmeinRaita2019}. However, the argument for building the extension operator in \cite{GmeinRaita2019} is not sensible to the shape of the ball and thus it can be applied verbatim to produce an extension operator $E: W^{\mathcal{A},1}(K_r(x))\rightarrow W^{\mathcal{A},1}(\R^d)$ for generic convex sets $K$ (in fact on every so-called Jones domain). The same proof as in \cite[Proposition 2.5]{gmeineder2017critical} yields the result of Proposition \ref{prop:sobBalls} on generic convex sets $K$. We are deeply grateful to F. Gmeineder for the fruitful clarification about this subject.
\end{remark}
The space $\BV^\AA(\Omega)$, endowed with the norm $\|u\|_{\BV^{\AA}}:=|\AA u|(\Omega)+\|u\|_{L^1}$, is a Banach space. The Poincaré-Sobolev inequality in \eqref{prop:sobBalls} provide a standard argument, by following for instance the ideas in \cite{kohn1980new}, to prove the following compactness Theorem.
\begin{theorem}[Compactness Theorem]\label{thm:cmp}
Let $\Omega\subset \R^n$ be an open bounded set with Lipschitz boundary and $\AA$ be a $\C$-elliptic first-order linear operator. Let $\{u_k\}_{k\in \N}\subset \BV^\AA(\Omega;\R^m)$. Suppose that
\[
\sup_{k\in \N}\{\|u_k\|_{\BV^{\AA}}\}<+\infty.
\]
Then there exists $u\in \BV^\AA(\Omega)$ and a subsequence $h(k)$ such that $u_{h(k)}\rightarrow u$ in $L^1$ and $\AA u_{h(k)} \wt \AA u$.
\end{theorem}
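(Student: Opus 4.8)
The plan is to split the statement into a soft part --- the weak-$*$ convergence of the gradients $\AA u_k$ in $\mathcal M(\Omega;V)$ --- and the substantial part, namely the strong $L^1(\Omega)$ convergence of the $u_k$, which is a Rellich--Kondrachov-type compact embedding $\BV^\AA(\Omega)\hookrightarrow\hookrightarrow L^1(\Omega)$. The $\C$-ellipticity enters through exactly two of its consequences recorded above: the Poincaré--Sobolev inequality of Proposition~\ref{prop:sobBalls}, and the finite-dimensionality of $\mathrm{Ker}(\AA)$ (\cite[Theorem~2.6]{breit2017traces}).

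First I would pass to the whole space. Since $\Omega$ is a bounded Lipschitz (hence Jones) domain, the extension operator of \cite{GmeinRaita2019} mentioned in the Remark after Proposition~\ref{prop:sobBalls} gives $Eu_k\in\BV^\AA(\R^n)$ with $Eu_k=u_k$ a.e.\ on $\Omega$ and $\|Eu_k\|_{\BV^\AA(\R^n)}\le C\|u_k\|_{\BV^\AA(\Omega)}$; multiplying by a fixed cutoff $\chi\in C^\infty_c(\R^n)$ with $\chi\equiv 1$ on $\overline\Omega$ and using the Leibniz rule $\AA(\chi w)=\chi\AA w+\sum_j(\partial_j\chi)A_j w$, I obtain $v_k:=\chi\,Eu_k$, bounded in $\BV^\AA(\R^n)$, with $v_k=u_k$ on $\Omega$ and $\spt v_k\subset B_R$ for some fixed $R$. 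This is the only place the Lipschitz assumption is used.

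Then comes the core estimate. Fix $\delta>0$ and tile $\R^n$ by the cubes $Q^\delta_i$ of side $\delta$ of a regular grid; each is a center-symmetric convex body, so Proposition~\ref{prop:sobBalls} (with $K$ the unit cube, yielding a constant $c=c(n)$ uniform in $i$ and $\delta$) combined with Hölder's inequality on a cube of side $\delta$ gives
\[
\big\|v_k-\Pi^{Q^\delta_i}_{\mathrm{Ker}}v_k\big\|_{L^1(Q^\delta_i)}\le \delta\,\big\|v_k-\Pi^{Q^\delta_i}_{\mathrm{Ker}}v_k\big\|_{L^{\frac{n}{n-1}}(Q^\delta_i)}\le c\,\delta\,|\AA v_k|(\overline{Q^\delta_i}).
\]
Summing over the finitely many cubes meeting $B_R$ and using that a point lies in at most $2^n$ of the closed grid cubes, I get, with $w^\delta_k:=\sum_i\big(\Pi^{Q^\delta_i}_{\mathrm{Ker}}v_k\big)\mathbf 1_{Q^\delta_i}$,
\[
\|v_k-w^\delta_k\|_{L^1(\R^n)}\le c\,2^n\,\delta\,|\AA v_k|(\R^n)\le C\delta\qquad\text{uniformly in }k.
\]
For \emph{fixed} $\delta$ the functions $w^\delta_k$ lie in one finite-dimensional space: finitely many cubes are involved, and on each $\Pi^{Q^\delta_i}_{\mathrm{Ker}}v_k\in\mathrm{Ker}(\AA)$, which is finite-dimensional by $\C$-ellipticity; moreover $\|w^\delta_k\|_{L^1}\le\|v_k\|_{L^1}+C\delta$ is bounded, so $(w^\delta_k)_k$ has an $L^1$-convergent subsequence. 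Applying this for $\delta=1/j$ and extracting a diagonal subsequence $h(k)$, $(w^{1/j}_{h(k)})_k$ converges in $L^1$ for every $j$; combined with $\|v_k-w^{1/j}_k\|_{L^1}\le C/j$, an $\varepsilon/3$ argument shows $(v_{h(k)})_k$ is Cauchy in $L^1(\R^n)$. Hence $v_{h(k)}\to v$ in $L^1(\R^n)$, and restricting to $\Omega$ gives $u_{h(k)}\to u:=v|_\Omega$ in $L^1(\Omega)$.

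Finally, $\AA u_k$ is bounded in $\mathcal M(\Omega;V)=(C_0(\Omega;V))^*$, so after a further extraction $\AA u_{h(k)}\wt\nu$ for some $\nu\in\mathcal M(\Omega;V)$; testing against $\varphi\in C^\infty_c(\Omega;V)$ and letting $k\to\infty$ in $\int_\Omega\varphi\cdot\d\AA u_{h(k)}=\int_\Omega\AA^*\varphi\cdot u_{h(k)}\d x$, where the right-hand side converges by $L^1$ convergence of $u_{h(k)}$, identifies $\nu=\AA u$. Thus $u\in\BV^\AA(\Omega)$, $u_{h(k)}\to u$ in $L^1$, and $\AA u_{h(k)}\wt\AA u$, as claimed. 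The main obstacle is the strong $L^1$ compactness; in the argument above it is resolved by the two $\C$-elliptic inputs, the first supplying the $O(\delta)$ approximation by piecewise kernel elements, the second the precompactness of those approximants at each fixed scale --- an argument in the spirit of Kohn's treatment of $\BD$ in \cite{kohn1980new}.
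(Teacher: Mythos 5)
Your argument is correct and is essentially the proof the paper has in mind: the paper omits it, saying only that the result follows by a standard argument from the Poincar\'e--Sobolev inequality of Proposition \ref{prop:sobBalls} along the lines of Kohn's treatment of $\BD$, and that standard argument is exactly your cube decomposition with kernel projections, the finite-dimensionality of $\mathrm{Ker}(\AA)$ granted by $\CC$-ellipticity, and a diagonal extraction, followed by the routine identification of the weak$^*$ limit of $\AA u_{h(k)}$. The only step to phrase with a little care is the very first one: the extension operator of the cited reference is built for $W^{\AA,1}$, so to apply it to $\BV^{\AA}$ you should first approximate each $u_k$ strictly by smooth maps (or, alternatively, bypass the extension by deriving a uniform $L^{\frac{n}{n-1}}(\Omega)$ bound and combining interior compactness with Vitali's theorem); either fix is routine.
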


The notation $\AA u_{h(k)} \wt \AA u$ stands for the standard \textit{weak$^*$ convergence} of Radon measures (see \cite{Maggi} or \cite{evans2018measure}).\\

 \subsubsection{Poincaré  inequality for $\CC$-elliptic operators}
 
 Let $\AA$ be a linear, ﬁrst order, homogeneous diﬀerential operator with constant coeﬃcients on  $\R^n$ which is $\CC$-elliptic. Let $K$ be a convex set of $\R^n$.
Then it is known, \cite[Theorem 3.7]{diening2021sharp}, that for a  uniform constant $c=c(K,n)>0$ the following Poincaré-type of inequality holds:
\begin{align} 
      \|u-\Pi^K_{\mathrm{Ker} }u\|_{L^{1}(K;\R^m)}\leq c|\AA u|(K).
\end{align}
Since we need to apply Poincaré inequality  (for compactness purposes) on specific projection operators the following Proposition might be of some use. The proof is exactly as in \cite{breit2017traces}, retrieved here for the sake of completeness.

\begin{proposition}[Poincaré inequality]\label{prop:poincare}
Let $K$ be a fixed convex set. Let $\AA$ be $\CC$-elliptic and let $\mathcal{R}: L^{1}(K;\R^m)\rightarrow \mathrm{Ker}(\mathcal{A})$ be a linear, bounded operator such that $\mathcal{R}(L)=L$ for all $L\in\mathrm{Ker}(\mathcal{A}) $. Then there exists a uniform constant $c=c(\mathcal{R},K,n)$ depending on $n,\mathcal{R}$ and $K$ such that
    \begin{align}\label{Poincare}
    \|u-\mathcal{R}[u]\|_{L^{1}(K;\R^m)}\leq  c|\AA u|(K).
    \end{align}
\end{proposition}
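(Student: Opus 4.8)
The plan is to derive the general Poincaré inequality \eqref{Poincare} from the already-available version with the canonical projection $\Pi^K_{\mathrm{Ker}}$ (stated just before the proposition), exploiting that $\mathcal{R}$ fixes $\mathrm{Ker}(\AA)$ pointwise and that $\mathrm{Ker}(\AA)$ is finite-dimensional by $\CC$-ellipticity. First I would observe that since $\Pi^K_{\mathrm{Ker}}u \in \mathrm{Ker}(\AA)$, the hypothesis $\mathcal{R}(L)=L$ for $L\in\mathrm{Ker}(\AA)$ gives $\mathcal{R}[\Pi^K_{\mathrm{Ker}}u] = \Pi^K_{\mathrm{Ker}}u$. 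Hence
\[
u - \mathcal{R}[u] = \bigl(u - \Pi^K_{\mathrm{Ker}}u\bigr) - \mathcal{R}\bigl[u - \Pi^K_{\mathrm{Ker}}u\bigr],
\]
using linearity of $\mathcal{R}$. Setting $w := u - \Pi^K_{\mathrm{Ker}}u$ we get $u-\mathcal{R}[u] = w - \mathcal{R}[w]$, so by the triangle inequality and boundedness of $\mathcal{R}$ on $L^1(K;\R^m)$,
\[
\|u-\mathcal{R}[u]\|_{L^1(K;\R^m)} \leq (1 + \|\mathcal{R}\|)\,\|w\|_{L^1(K;\R^m)} = (1+\|\mathcal{R}\|)\,\|u - \Pi^K_{\mathrm{Ker}}u\|_{L^1(K;\R^m)}.
\]

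Next I would invoke the Poincaré inequality with the canonical projection, valid by $\CC$-ellipticity (the inequality $\|u - \Pi^K_{\mathrm{Ker}}u\|_{L^1(K;\R^m)} \leq c(K,n)|\AA u|(K)$ displayed right before the proposition, or alternatively the $L^{n/(n-1)}$ version from Proposition \ref{prop:sobBalls} composed with H\"older on the bounded set $K$), to bound $\|w\|_{L^1(K;\R^m)} \leq c(K,n)|\AA u|(K)$. Combining the two displays yields \eqref{Poincare} with constant $c(\mathcal{R},K,n) := (1+\|\mathcal{R}\|)\,c(K,n)$, which depends only on $n$, $K$, and the operator norm of $\mathcal{R}$, as claimed.

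The only genuine subtlety — and the point where I would be careful rather than the point that is truly hard — is making sure the "$u$ minus its kernel projection" inequality I use on the right-hand side is legitimate: it must hold for the \emph{same} class of $u$ (namely $u\in\BV^{\AA}_{loc}$ or at least $u$ for which $\AA u$ is a finite measure on $K$) and on the same convex set $K$. This is exactly the content of \cite[Theorem 3.7]{diening2021sharp} (equivalently Proposition \ref{prop:sobBalls} with H\"older, since $|K|<\infty$), so no new work is needed. One should also note that $\mathcal{R}[u]$ is well-defined in $L^1(K;\R^m)$ precisely because $\mathcal{R}$ is assumed bounded on $L^1(K;\R^m)$, and lands in $\mathrm{Ker}(\AA)$ so that $\AA(u - \mathcal{R}[u]) = \AA u$ — this is implicitly what allows the argument to close, but it is immediate from the hypotheses. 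Everything else is the three-line chain of inequalities above.
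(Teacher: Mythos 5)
Your proof is correct and follows essentially the same route as the paper's: both rest on writing $u-\mathcal{R}[u]$ in terms of $u-\Pi^K_{\mathrm{Ker}}u$ via the fact that $\mathcal{R}$ fixes $\mathrm{Ker}(\AA)$, then using boundedness of $\mathcal{R}$ on $L^1$ together with the Poincaré inequality for the canonical projection $\Pi^K_{\mathrm{Ker}}$. The only difference is cosmetic — you apply the algebraic identity $u-\mathcal{R}[u]=w-\mathcal{R}[w]$ before the triangle inequality, whereas the paper splits first and identifies $\mathcal{R}[u]-\Pi^K_{\mathrm{Ker}}u=\mathcal{R}[u-\Pi^K_{\mathrm{Ker}}u]$ afterwards.
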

\begin{proof}
Since $\AA$ is $\CC$-elliptic, inequality \eqref{SGNforA} holds. Then
   \begin{align*}
    \|u-\mathcal{R}[u]\|_{L^{1}(K;\R^m)}&\leq\|u-\Pi_{\mathrm{Ker} }^K u\|_{L^{1}(K;\R^m)}+\|\mathcal{R}[u]-\Pi_{\mathrm{Ker} }^Ku\|_{L^{1}(K;\R^m)}\\
    &\leq c |\AA u|(K)+\|\mathcal{R}[u-\Pi_{\mathrm{Ker} }^Ku]\|_{L^{1}(K;\R^m)}
   \end{align*}
   Since $\mathcal{R}$ is continuous we also have
    \begin{align*}
    \|\mathcal{R}(u-\Pi_{\mathrm{Ker} }^K u)\|_{L^{1}(K;\R^m)}&\leq c(\mathcal{R},K,n) \|u-\Pi_{\mathrm{Ker} }^Ku\|_{L^{1}(K;\R^m)}\leq c|\AA u|(K)
    \end{align*}
thus proving \eqref{Poincare}.
\end{proof}

\subsection{Structure of maps of bounded $\AA$-variation maps}\label{sbs:structure}
We here collect the results related to the structure of $\AA u$ under different assumptions on $\AA$. The majority of these results heavily rely upon the pioneering works \cite{arroyo2019dimensional},\cite{breit2017traces} and \cite{de2016structure}.

\subsubsection{Trace and Gauss-Green theorems}
Under $\CC$-ellipticity, in \cite{breit2017traces} is shown that a function $u\in\BV^{\AA}(\Omega;\R^m)$ possesses the trace on $\partial\Omega$ and the Gauss-Green formula
\[
\int_{\Omega} \varphi(x)\cdot \d \AA u(x) = \int_{\Omega} \AA^*\varphi (x)\cdot  u(x)\d x+\int_{\partial \Omega} (u(y)\otimes_{\A} \nu_\Omega(y) )\cdot \varphi(y) \d \H^{n-1}(y)
\] 
holds true for all $\varphi\in C^{\infty}(\Omega;V)$. The trace is continuous under \textit{strict convergence}: namely if $u_k\rightarrow u$ in $L^1$ and $|\AA u_k|(\Omega) \rightarrow |\AA u|(\Omega)$ then the trace $u_k\Big{|}_{\partial \Omega}\rightarrow u\Big{|}_{\partial \Omega}$ in $L^1$.  
To the knowledge of the authors, the first 
trace inequalities on $s\in (n-1, n)$-dimensional sets can be found in \cite{Gmeineder1}. 

 \begin{definition}[Approximate jump\label{def:jump}] Let $u \in \
L^1_\loc(\Omega;\R^m)$.
        We say that a point $x$ is an \emph{approximate jump point} of $u$ if there exist \emph{distinct} vectors $u^+,u^- \in \R^m$ and a direction $\nu \in \mathbb{S}^{n-1}$ satisfying 
        \begin{equation}\label{eq:jumps}
       \left\{ \begin{array}{l}     
     \displaystyle
        \lim_{r \to  0} \fint_{B^+_r(x,\nu)} |u(y) - u^+| \d y = 0,\\
        \displaystyle
        \lim_{r \to 0} \fint_{B^-_r(x,\nu)}  |u(y) - u^-| \d y = 0.
   \end{array} \right.
        \end{equation}
        Here, we use the notation
        \[
        B^+_r(x,\nu):=\{ y \in B_r(x) \  \nu\cdot (x-y)  > 0\},\qquad   B^-_r(x,\nu):=\{ y \in B_r(x) \  \nu\cdot (x-y)  < 0\}
        \]
        for the $\nu$\emph{-oriented half-balls} centred at $x$. We refer to $u^+,u^-$ as the \emph{one-sided limits} of $u$ at $x$ with respect to $\nu$. The triplet is uniquely idendified up to a change of sign for $\nu$ and a permutation of $u^+,u^-$. The collection of all points of approximate jump for $u$ is denoted as $J_u$.
\end{definition}
We underline the remarkable result in \cite{del2021rectifiability} proving that, for a function $u\in L^1_{loc}(\Omega;\R^m)$ (without any additional information on the derivatives of $u$), $J_u$ is $\H^{n-1}$-rectifiable.

To the knowledge of the authors, the first characterisation of continuity points, and first
contribution on the fine properties of $BV^\mathcal A$-maps can be found in \cite{Diening1}. Moreover, the first systematic understanding of the algebraic properties of symbols to
yield trace theorems and hereafter fine properties on lower dimensional
sets can be found in \cite{Gmeineder2}. 
\subsubsection{Radon-Nikodým decomposition of $\AA u$}
Let $u\in\BV^{\AA}(\Omega;\R^m)$ and let us denote by $S_u$ the complement of the set of points of approximate continuity of $u$ (those points where $u^+=u^-$). Then Radon-Nikodým writes as
$$
\AA u=\AA^a u+\AA^su=\AA^a u+\underset{=\AA^{c}}{\underbrace{\AA^su\mres(\Om\setminus S_u)}}+\underset{=\AA^{dd}}{\underbrace{\AA^su\mres(S_u\setminus J_u)}}+\underset{=\AA^{j}}{\underbrace{\AA^su\mres J_u}}.
$$
The term $\AA^{c}$ is the \textit{diffuse continuous or Cantor-singular part}. The term $\AA^{dd}$ is the \textit{diffuse discontinuous singular part}. \\

It is proved in \cite[Lemma 3.1]{gmeineder2017critical} that, under $\R$-ellipticity of $\AA$, $u\in \BV^{\AA}(\Om;\R^m)$ is $L^p$-differentiable (and hence approximately-differentiable) $\L^n$-almost everywhere for any $1\leq p<\frac{n}{n-1}$. For $p=\frac{n}{n-1}$ the same holds but $\CC$-ellipticity is needed for $\AA$.  Finally, for $\R$-elliptic operator (see \cite[Theorem 3.4.]{alberti2014p}) we have $\L^n$-almost everywhere that
\begin{equation}\label{eqn:appdiff}
\frac{d\AA u}{d\mathcal L^n}(x)\L^n=\A[\nabla]u(x),
\end{equation}
yielding $\AA^a u= \A[\nabla]u \L^n$.\\

For $\CC$-elliptic operators it is known \cite[Theorem 1.2]{arroyo2019fine}, that $(S_u\setminus J_u)$ is purely $\H^{n-1}$-unrectifiable and that the jump parts writes   as \cite{breit2017traces}
$$ \AA^{j}=(u^+-u^-)\otimes_\A \nu \H^{n-1}\mres J_u.  $$

Finally, for first order linear operators $\AA$ satisfying a specific hypothesis called $\textit{rank}_\mathcal{A}$-\textit{one} property, in \cite{arroyo2020slicing} it is proven that $|\AA u|(S_u\setminus J_u)=0$ implying $\AA^{dd}=0$.\\
 Moreover,  according to \cite{arroyo2020slicing}, the rank-one property allows one to perform one-dimensional slicing of such maps, i.e., there exist directions $a,b$ such that $\forall v\in V$ it holds
\[
 v\cdot\AA u=\int_{\pi_a}Du^b_{y,a}d\H^{n-1}(y),
 \]
 where $\cdot$ is by Riesz intended for the duality $(V,V^*)$,
 with the slice of $u$ defined as $u^b_{y,a}:\Om^a_y:=\{s\in\R:y+sa\in\Om\}\to\R$, where $u^b_{y,\eta}(t):= b\cdot u(y+ta)$ and with the hyper-plane orthogonal to $a$: $\pi_a:=\{\zeta\in\R^n:a\cdot\zeta=0\}$. Moreover it holds that $u^b_{y,a}\in BV(\Omega_y^a;\R)$ for $\H^{n-1}$-almost all $y\in\pi_a$.
 These results in $\BV$ and $\BD$ can be found in \cite{ ambrosio1997fine, ambrosio2000functions,rindler2018calculus}.
 
\subsubsection{Annihilators and structure property of the polar of $\mu=\AA u$}\label{sbs:ann}

Given the Radon measure $\sigma\in\mathcal M(\Om;W)$, let the Radon measure $\mu\in\mathcal M(\Om; V)$ satisfy
  \begin{align*}
   \mathcal B\mu= \sum_{j=1}^n B_j \partial_j \mu=\sigma\quad\mbox{ in } \mathcal D'(\Om;W),
  \end{align*}
with tensor coefficients $B_j\in\mathrm{Lin}(V,W)\sim W\times V^*$, with its \textit{symbol} $\mathbb B[\xi]:  V \rightarrow W$
\begin{align}
   \mathbb B[\xi] :=\sum_{j=1}^n \xi_j B_j .\nonumber
  \end{align}
The wave cone of $\mathcal B$ is defined as
\begin{align}
 \Lambda_{\mathcal{B}}:=\displaystyle\bigcup_{\xi\in S^{n-1}}\mathrm{Ker}   \mathbb B[\xi] \subset V. \label{wavecone}
  \end{align}
Under these conditions De Philippis and Rindler in \cite{de2016structure} have proved that the polar of $\mu$ has the following particular structure 
  \begin{align}
\frac{d\mu}{|d\mu|}\in  \Lambda_\mathcal B\quad |\mu|^s-\mbox{a.e. in } \Omega. 
  \end{align}
In $\BV(\Om;\R^m)$ and for $\mathcal B$ the \textit{curl operator}, one recovers Alberti's rank-one theorem \cite{alberti1993rank},
\begin{equation}\label{eqn:rankOneBV}
\frac{d D^su}{d |D^su|}(x)=a(x)\otimes b(x),
\end{equation}

for $D^su$-almost all $x\in\Om$ with $a(x),b(x)\in \R^n$. 
  In $\BD(\Om;\R^{n})$ and for $\mathcal B$ the \textit{incompatibility operator} \cite{AVG2016}, one finds
\begin{equation}\label{eqn:rankOneBD}
\frac{d \mathcal E^su}{d |\mathcal E^su|}(x)=a(x)\odot b(x),
\end{equation}
for $\mathcal E^su$-almost all $x\in\Om$, and
with $a(x),b(x)\in \R^n$.

\section{Iterated strict blow-ups}\label{sct:it}
We consider $\mathcal{A}$  to be a generic $\mathbb{C}$-elliptic operator and we consider $\BV^{\mathcal{A}}(\Omega;\R^m)$ to be the space of all the functions with bounded $\mathcal{A}$ variation from an open bounded regular set $\Omega$ into $\R^m$. For any fixed convex set $K$, $u\in \BV^{\mathcal{A}}(\Omega;\R^m)$, $x\in \Omega$ and $\e<\dist(x,\partial\Om)$ we define
	\[
	u_{K,\e,x}(y):=\frac{u(x+\e y) -\mathcal{R}_K[u(x+\e \cdot)](y)}{\frac{|\mathcal{A} u|(K_{\e}(x) )}{|K|\e^{n-1}}}.
	\]
being $\mathcal{R}_K: L^1(K;\R^m) \rightarrow  \mathrm{Ker}(\mathcal{A})$ any linear and bounded operator fixing $ \mathrm{Ker}(\mathcal{A})\cap L^1(K;\R^m)$. Note that $\mathcal{R}_K[u_{K,\e,x}]=0$, by the very Definition of the blow-up sequence.\\

We use standard notations for the push-forward of measures, and in particular, given 
$\mu\in \mathcal{M}_{loc}(\R^n;V)$, $x\in\R^n$ and $\e>0$, we will consider the push forward with the map 
$F^{x,\e}(y):=\frac{y-x}{\e}$ defined as
	\begin{equation}\label{e:pushforward}
	F_{\#}^{x,\e}\mu(A):=\mu(x+\e A).
	\end{equation}
Preiss' tangent space $\Tan(\mu,x)$ at a given point $x\in\R^n$, is defined as the subset of non zero measures 
 $\nu\in\mathcal{M}_{loc}(\R^n;V)$ 
such that $\nu$ is the local weak* limit 
of $\sfrac1{c_i}F_\#^{x,\e_i}\mu$, for some sequence $\e_i\downarrow 0$ 
as $i\uparrow+\infty$ and for some positive sequence $\{c_i\}_{i\in\N}$ 
(see  \cite{ambrosio2000functions}, \cite{mattila1999geometry}, \cite{rindler2018calculus}). 
\begin{remark}[Derivative of blow-ups]
Note that, with the given notations, an easy computation shows that
    \begin{equation}
        \mathcal{A}u_{K,\e,x}=|K| \frac{F^{x,\e}_{\#}\mathcal{A}u}{F^{x,\e}_{\#}|\mathcal{A}u|(K)}.\nonumber
    \end{equation}
Morever, any $L^1$-limit point $v$ of $\{u_{K,\e,x}\}_{\e>0}$ satisfies $ \mathcal{A}u_{K,\e,x}\wt \mathcal{A}v$, and hence has constant polar on $K$, i.e. satisfies
     \begin{equation}
    \label{eqn:derBU}
     \mathcal{A}v=\left(\frac{\d \mathcal{A}u } {\d  |\mathcal{A}u| }(x)\right)  |\mathcal{A}v| \ \ \ \text{on $K$}.
    \end{equation}
\end{remark}

To ensure that the total variation is preserved 
along the blow-up limit procedure we recall the ensuing result. 

\begin{lemma}[Tangent measure with unit mass, Lemma~10.6 \cite{rindler2018calculus}]\label{lem:unit mass conve} 
Let $\mu\in \mathcal{M}_{loc}(\R^n;V)$. 
Then, for $|\mu|$-a.e. $x\in \R^n$ and for every 
bounded, open, convex set $K$ the following assertions hold
\begin{itemize}
	\item[(a)] There exists a tangent measure $\gamma\in \Tan(\mu,x)$ such that $|\gamma|(K)=1$, 
	$|\gamma|(\partial K)=0$;
	\item[(b)] There exists 
	$ \e_i \downarrow 0$ as $i\uparrow+\infty$ such that 
	$\frac{F^{x,\varepsilon_i}_{\#}\mu}{F^{x,\e_i}_{\#}|\mu|(K)}\wt \gamma$ in $\mathcal{M}(\overline{K};V)$.
\end{itemize}	 
\end{lemma}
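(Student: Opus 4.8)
The statement to be proved is Lemma \ref{lem:unit mass conve}, which is exactly Lemma 10.6 in Rindler's book. The plan is to derive it from the basic theory of tangent measures together with the fact that tangent measures can be taken ``non-atomic on the boundary of $K$'' after a rescaling. First I would recall that, for a general Radon measure $\mu\in\mathcal{M}_{\loc}(\R^n;V)$, the set $\Tan(\mu,x)$ is nonempty for $|\mu|$-a.e. $x$; this follows from the upper/lower density estimates for $|\mu|$ (which are finite and positive $|\mu|$-a.e.) and a compactness argument for Radon measures, exactly as in \cite[Theorem~14.16]{mattila1999geometry}. So fix such an $x$ and a bounded open convex set $K$; by translating we may assume $x=0$ and we work with the dilations $\mu_{0,r}:=c_r^{-1}F^{0,r}_{\#}\mu$ for suitable normalising constants $c_r>0$.

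The key step is the choice of the normalisation and of the scales. The natural choice is $c_r:=F^{0,r}_{\#}|\mu|(K)=|\mu|(rK)$, so that $|\mu_{0,r}|(K)=1$ for every $r$ with $|\mu|(\partial(rK))=0$ — which holds for all but countably many $r$, since the sets $\partial(rK)$ are disjoint for distinct $r$ and $|\mu|$ is finite on compact sets. Then $\{\mu_{0,r}\}$ is bounded in $\mathcal{M}(\overline{K};V)$, so along some sequence $r=\e_i\downarrow0$ it converges weak$^*$ in $\mathcal{M}(\overline{K};V)$ to some $\gamma$ with $|\gamma|(\overline K)\le 1$. The two things to check are: (i) $\gamma$ is a genuine (nonzero) tangent measure, i.e. $|\gamma|(K)=1$ and not just $\le 1$, and (ii) $|\gamma|(\partial K)=0$. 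Both follow if we can pass the normalisation to the limit without loss of mass, and this is where the standard argument requires care: by weak$^*$ lower semicontinuity $|\gamma|(K)\le\liminf|\mu_{0,\e_i}|(K)=1$ gives one inequality, while to get $|\gamma|(K)\ge1$ one uses that $|\mu_{0,\e_i}|(\overline K)=|\mu|(\overline{\e_i K})/|\mu|(\e_i K)\to1$ (because $|\mu|(\overline{rK})/|\mu|(rK)\to1$ for $|\mu|$-a.e.\ $x$, which is itself a density/differentiation statement that holds for $|\mu|$-a.e.\ $x$) combined with an outer-regularity argument showing no mass escapes to $\partial K$; this last point is precisely what forces $|\gamma|(\partial K)=0$ and simultaneously $|\gamma|(K)=1$. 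One then must also verify $\gamma\in\Tan(\mu,0)$ in the sense defined in the excerpt, which is immediate since $\gamma$ is by construction a weak$^*$ limit of $c_i^{-1}F^{0,\e_i}_\#\mu$ on every compact set (the convergence on $\overline K$ upgrades to local convergence on $\R^n$ along a further diagonal subsequence using that $\Tan(\mu,0)\ne\emptyset$ and the cone/scaling structure of tangent measures).

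I would organise the argument as: (1) fix a $|\mu|$-generic point $x$ where $\Tan(\mu,x)\ne\emptyset$, where the finite upper density $\Theta^{*,n}(|\mu|,x)<\infty$ and positive lower density hold, and where $|\mu|(\overline{B_r(x)})/|\mu|(B_r(x))\to 1$; (2) choose scales $\e_i\downarrow 0$ avoiding the countably many radii with $|\mu|(\partial(\e_iK))>0$ and passing to a weak$^*$-convergent subsequence of the unit-mass dilations on $\overline K$; (3) identify the limit $\gamma$, proving $|\gamma|(K)=1$ and $|\gamma|(\partial K)=0$ via the no-mass-loss argument sketched above; (4) check $\gamma\in\Tan(\mu,x)$. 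The main obstacle is step (3): ruling out concentration of mass on $\partial K$ in the limit. The clean way around it is to note that the set of ``bad'' radii where $|\mu|(\partial(rK))>0$ is countable, and to combine this with the density ratio $|\mu|(\overline{\e_i K}(x))/|\mu|(\e_i K(x))\to1$, which pins the total mass of the limit on $\overline K$ to exactly $1$ and, together with $|\gamma|(K)\le 1$, leaves no room for mass on $\partial K$. Since this is verbatim \cite[Lemma~10.6]{rindler2018calculus}, in the paper I would simply cite that reference rather than reproduce the details.
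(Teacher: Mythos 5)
The paper itself gives no proof of this lemma: it is quoted as \cite[Lemma~10.6]{rindler2018calculus}, so your final decision to cite rather than reproduce the argument is consistent with what the authors do. The problem is that the sketch you offer in its place breaks down exactly at the step you yourself identify as the crux. You normalise by $c_{\e}=F^{x,\e}_{\#}|\mu|(K)$, choose radii $\e_i$ with $|\mu|(x+\e_i\partial K)=0$, extract a weak* limit $\gamma$ in $\mathcal{M}(\overline{K};V)$, and then claim that the ratio $|\mu|(x+\e_i\overline{K})/|\mu|(x+\e_i K)\to 1$ ``pins the mass of the limit to $1$ and leaves no room for mass on $\partial K$''. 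These are statements at each fixed scale and they do not pass to the limit: the measures $|\mu_{K,\e_i,x}|$ can concentrate on inner neighbourhoods of $\partial K$ shrinking with $i$, so the limit may charge $\partial K$ even though every approximant gives it zero mass; and since $\mu$ is $V$-valued, weak* lower semicontinuity of the total variation only yields $|\gamma|(\overline{K})\le 1$, with no matching lower bound — oscillation/cancellation can make $|\gamma|$ strictly smaller, a priori even $\gamma=0$. So neither $|\gamma|(K)=1$ nor $|\gamma|(\partial K)=0$ follows from your step (3); that is the actual content of the lemma, not a routine verification. A further gap: for $\gamma\in\Tan(\mu,x)$ in the sense used in the paper you need \emph{local} weak* convergence of $c_i^{-1}F^{x,\e_i}_{\#}\mu$ on all of $\R^n$, hence uniform mass bounds on every $B_R$ with this normalisation; this is a doubling-type property along the chosen scales (the same ingredient behind the a.e.\ existence of nonzero tangent measures), and your ``diagonal upgrade'' silently assumes it.

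The standard proof, which is the one behind the cited lemma, avoids both problems by arguing on the limit measure rather than on the approximants. For (a): at $|\mu|$-a.e.\ $x$ pick any nonzero $\sigma\in\Tan(\mu,x)$, choose $s>0$ with $|\sigma|(sK)>0$ and $|\sigma|(\partial(sK))=0$ (all but countably many $s$ work), and set $\gamma:=F^{0,s}_{\#}\sigma/|\sigma|(sK)$; by the scaling invariance of the tangent cone (cf.\ Theorem \ref{thm:Mattila}(a)) $\gamma\in\Tan(\mu,x)$, and $|\gamma|(K)=1$, $|\gamma|(\partial K)=0$ hold by construction, because the boundary condition is imposed on the limit, not on the blow-up sequence. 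For (b): one restricts further to $|\mu|$-a.e.\ $x$ which is a Lebesgue point of the polar $\frac{\d\mu}{\d|\mu|}$; there, whenever $c_i^{-1}F^{x,\e_i}_{\#}\mu\wt\gamma$ one also has $c_i^{-1}F^{x,\e_i}_{\#}|\mu|\wt|\gamma|$ (this is what excludes cancellation), and since $|\gamma|(\partial K)=0$ it follows that $F^{x,\e_i}_{\#}|\mu|(K)/c_i\to|\gamma|(K)=1$, so the abstract normalisers $c_i$ can be replaced by $F^{x,\e_i}_{\#}|\mu|(K)$, which is exactly assertion (b). Your sketch contains neither the rescaling-of-the-limit step nor the Lebesgue-point/polar argument, and without them the no-mass-loss claim is unsupported; citing \cite[Lemma~10.6]{rindler2018calculus}, as the paper does, is of course fine.
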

Up to the light of the previous Proposition it is convenient to introduce the following notion limit point for blow-ups (subordinated to the proof of Proposition \ref{lem:BB}).
\begin{definition}[Strict blow-up limit of $u$]\label{def:Strict}
Let $K$ be a convex set. We say that $(v,\gamma)\in \BV^{\mathcal{A}}(K;\R^m)\times \mathcal{M}(\ov{K};V)$ is a \textit{strict blow up limit} for $u\in \BV^{\mathcal{A}}(\Omega;\R^n)$ at a point $x$, and with respect to the convex set $K$ (and we write $(v,\gamma)\in \mathrm{bu}_{K}(u;x)$), if there is a vanishing sequence $\{\e_{i}\}_{i\in \N}$ such that
	\begin{itemize}
	\item[1)] $u_{K,\e_i,x}$ converges strictly to $v$ in $\BV^{\mathcal{A}}(K;\R^m)$:
 \[
 u_{K,\e_i,x}\rightarrow v\   \text{in $L^1$ and}\  |\AA u_{K,\e_i,x}|(K) \rightarrow  |\AA v|(K);
 \]
\item[2)] $\gamma \in \mathrm{Tan}(\mathcal{A}u,x)$ is such that 
\begin{align*}    
&\frac{1}{\L^n(K)} \mathcal{A}u_{K,\e_i,x} \wt \gamma \ \text{in}\  \mathcal{M}(\bar{K};V  )\\
&|\gamma|(K)=|\gamma (K)|=1, \ \ |\gamma|(\partial K)=0,\\
&\gamma =\frac{\d \mathcal{A}u}{\d |\mathcal{A}u|}(x) |\gamma| \ \ \text{for $|\gamma|$ -a.e. $x$ in $K$} \\
&\mathcal{A} v=\L^n(K) \gamma \llcorner_{K}.
\end{align*}
  
	\end{itemize}
\end{definition}
The following ensures that $ \mathrm{bu}_{K}(u;x)$ is never empty.
	\begin{lemma}\label{lem:BUnotEmpty}
	Let $K$ be a center symmetric convex set. Let $u\in \BV^\mathcal{A}(\Omega,\R^m)$. Then, for $|\mathcal{A}u|$-a.e. $x\in \Omega$ the set $\mathrm{bu}_K(u;x)$ is not empty. 
	\end{lemma}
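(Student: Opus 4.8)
The plan is to combine the two abstract results already quoted in the excerpt: Lemma~\ref{lem:unit mass conve} (existence of a tangent measure with unit mass on $K$ that gives no mass to $\partial K$, realized along an explicit sequence $\e_i\downarrow 0$) and the Poincar\'e-type compactness machinery of Proposition~\ref{prop:poincare} together with the Compactness Theorem~\ref{thm:cmp}. Fix a point $x$ in the full-measure set where Lemma~\ref{lem:unit mass conve} applies to $\mu=\AA u$, and where in addition the Radon-Nikod\'ym derivative $\frac{\d\AA u}{\d|\AA u|}(x)$ exists and the density ratio $\frac{|\AA u|(K_\e(x))}{|K|\e^{n-1}}$ behaves well enough (all of which hold $|\AA u|$-a.e.). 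Let $\e_i\downarrow0$ be the sequence given by part (b) of that lemma, so that $\frac{F^{x,\e_i}_\#\AA u}{F^{x,\e_i}_\#|\AA u|(K)}\wt\gamma$ in $\mathcal M(\ov K;V)$ with $|\gamma|(K)=|\gamma(K)|=1$ and $|\gamma|(\partial K)=0$. Using the identity recorded in the Remark on the derivative of blow-ups, $\AA u_{K,\e_i,x}=|K|\,\frac{F^{x,\e_i}_\#\AA u}{F^{x,\e_i}_\#|\AA u|(K)}$, this rescaled sequence of measures converges weak$^*$ to $|K|\gamma=\L^n(K)\gamma$ on $\ov K$, so item 2) of Definition~\ref{def:Strict} is almost immediate once we produce the $L^1$ limit $v$.

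Next I would extract the function $v$. By construction $\mathcal R_K[u_{K,\e_i,x}]=0$, so Poincar\'e (Proposition~\ref{prop:poincare} with this same operator $\mathcal R=\mathcal R_K$) gives
\[
\|u_{K,\e_i,x}\|_{L^1(K;\R^m)}=\|u_{K,\e_i,x}-\mathcal R_K[u_{K,\e_i,x}]\|_{L^1(K;\R^m)}\le c\,|\AA u_{K,\e_i,x}|(K),
\]
and $|\AA u_{K,\e_i,x}|(K)=|K|\,\frac{F^{x,\e_i}_\#|\AA u|(K)}{F^{x,\e_i}_\#|\AA u|(K)}=|K|$ is uniformly bounded (indeed exactly $|K|$ for every $i$). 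Hence $\{u_{K,\e_i,x}\}$ is bounded in $\BV^\AA(K;\R^m)$, so by Theorem~\ref{thm:cmp} (applied on the Lipschitz domain $K$) a subsequence, not relabelled, converges in $L^1(K;\R^m)$ to some $v\in\BV^\AA(K;\R^m)$ with $\AA u_{K,\e_i,x}\wt\AA v$. Weak$^*$ limits are unique, so $\AA v=\L^n(K)\gamma\llcorner K$, which is the last line of item 2). Lower semicontinuity of total variation under weak$^*$ convergence gives $|\AA v|(K)\le\liminf_i|\AA u_{K,\e_i,x}|(K)=|K|$; for the reverse inequality note that $|\gamma|(\partial K)=0$ forces $|\AA u_{K,\e_i,x}|(K)\to|K|\,|\gamma|(K)=|\AA v|(K)$ by the standard fact that weak$^*$ convergence plus no mass on the (relatively open/closed) boundary upgrades to convergence of masses on $K$. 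This establishes strict convergence, i.e. item 1), so $(v,\gamma)\in\mathrm{bu}_K(u;x)$ and the set is nonempty at $x$.

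The main obstacle is bookkeeping at the boundary of $K$ and in the normalization: one must be careful that the denominator $\frac{|\AA u|(K_\e(x))}{|K|\e^{n-1}}$ in the definition of $u_{K,\e,x}$ is exactly $\frac1{|K|}F^{x,\e}_\#|\AA u|(K)$, that this is what makes $|\AA u_{K,\e_i,x}|(K)\equiv|K|$, and that the passage from weak$^*$ convergence of the $\AA u_{K,\e_i,x}$ to convergence of their masses genuinely uses $|\gamma|(\partial K)=0$ (not merely $|\gamma|(\partial K)=0$ for the limit, but the absence of concentration escaping to $\partial K$ along the sequence, which is precisely the content of part (a)--(b) of Lemma~\ref{lem:unit mass conve}). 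One should also check that Theorem~\ref{thm:cmp} is applicable: $K$ is a bounded convex set, hence has Lipschitz boundary, and $\AA$ is $\CC$-elliptic by the standing assumption of Section~\ref{sct:it}. Finally, the set of ``bad'' points excluded is contained in the union of the exceptional $|\AA u|$-null sets coming from Lemma~\ref{lem:unit mass conve} and from the Radon-Nikod\'ym differentiation theorem, hence is itself $|\AA u|$-null, which yields the ``$|\AA u|$-a.e. $x$'' in the statement.
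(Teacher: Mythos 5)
Your argument is correct and follows essentially the same route as the paper's proof: apply Lemma \ref{lem:unit mass conve} to $\mu=\AA u$, use $\mathcal{R}_K[u_{K,\e_i,x}]=0$ together with the Poincar\'e inequality \eqref{Poincare} and the compactness Theorem \ref{thm:cmp} to extract an $L^1$-limit $v$, and identify $\AA v=\L^n(K)\,\gamma\llcorner K$, whence $(v,\gamma)\in\mathrm{bu}_K(u;x)$. One small caution: the ``standard fact'' you invoke, that weak$^*$ convergence plus $|\gamma|(\partial K)=0$ upgrades to convergence of masses, is false in general for vector-valued measures (cancellation can make the total variation drop in the limit), but it is not needed here, since $|\AA u_{K,\e_i,x}|(K)=\L^n(K)$ for every $i$ and $|\AA v|(K)=\L^n(K)|\gamma|(K)=\L^n(K)$ follows directly from the unit-mass normalization $|\gamma|(K)=1$ guaranteed by Lemma \ref{lem:unit mass conve}.
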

	\begin{proof}
We apply Lemma \ref{lem:unit mass conve} to find $\gamma\in \mathrm{Tan}(\mathcal{A}u,x)$ and $\e_i \rightarrow 0$ such that
    \[
    \frac{F_{\#}^{x,\e_i} \mathcal{A} u}{F_{\#}^{x,\e_i} |\mathcal{A} u|(K)}\wt \gamma \ \ \ \text{in $\mathcal{M}(\overline{K};\R^m)$}.
    \]
Set $u_i:=u_{K,\e_i,x}$. Since $\mathcal{R}_K[u_{K,\e_i,x}]=0$, by \eqref{Poincare}
    \begin{align*}
        \|u_i\|_{L^{1}(K;V)}=  \|u_i-\mathcal{R}_K[u_i]\|_{{L^{1}(K;V)}}\leq c |\mathcal{A}u_i|(K)=c\L^n(K)
    \end{align*}
for some $c$ depending on $K,\mathcal{R}_K$ and $n$ only. This implies (see Theorem \ref{thm:cmp}) that, up to a further subsequence, $u_{i}\rightarrow v$ in $L^1(K;\R^m)$ with  $v\in \BV^{\mathcal{A}}(K;\R^m)$. In particular
    \[
    \mathcal{A}u_{i}\wt \mathcal{A} v.
    \]
Now by the properties of $\gamma$ and $v$ follows that $(v,\gamma)\in \mathrm{bu}_K(u;x)$.
	\end{proof}
Finally, with the help of the next result, we will be able to implement an iterated blow-up Lemma.
\begin{theorem}[Tangent measures to tangent measures are tangent measures, Theorem~14.16 \cite{mattila1999geometry}]\label{thm:Mattila}
Let $\mu\in \mathcal{M}_{loc}(\R^n;V)$ be a Radon measure. Then for $|\mu|$-a.e. $x\in \R^n$ any $\nu\in\Tan(\mu,x)$ satisfies the following properties
\begin{itemize}
\item[(a)] For any convex set $K$, $\frac{F^{y,\rho}_{\#} \nu}{F^{y,\rho}_{\#}|\nu|(K)} \in  \Tan(\mu,x)$  for all $y\in\spt \nu$ and $\rho>0$;
\item[(b)] $\Tan(\nu,y)\subseteq  \Tan(\mu,x)$ for all $y\in\spt\nu$;
\end{itemize}
\end{theorem}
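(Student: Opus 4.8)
The plan is to follow Mattila's classical argument, isolating a single measure-differentiation lemma and deriving (a) and (b) from it by formal manipulations of push-forwards and a diagonal extraction. Write $F^{x,\e}(z)=(z-x)/\e$ and let $\tau_w$ denote the translation $u\mapsto u+w$. First I would record the bookkeeping identities $F^{z_0,s_0}_\#F^{x,\e}_\#\sigma=F^{x+\e z_0,\,\e s_0}_\#\sigma$ and $F^{x+\e z_0,\e}_\#\sigma=(\tau_{-z_0})_\#\big(F^{x,\e}_\#\sigma\big)$, valid for every Radon measure $\sigma$; since push-forward by a fixed homeomorphism is continuous for local weak$^*$ convergence, it follows that for $\nu\in\Tan(\mu,x)$, say $\nu=\wlim_i c_i^{-1}F^{x,\e_i}_\#\mu$ with $\e_i\downarrow 0$, $c_i>0$, and for every $y$, $\rho>0$,
\[
F^{y,\rho}_\#\nu=\wlim_i c_i^{-1}F^{x+\e_i y,\,\e_i\rho}_\#\mu=F^{0,\rho}_\#\big((\tau_{-y})_\#\nu\big),\qquad (\tau_{-y})_\#\nu=\wlim_i c_i^{-1}F^{x+\e_i y,\,\e_i}_\#\mu.
\]
Moreover $\Tan(\mu,x)$ is a cone invariant under pure dilations $F^{0,\rho}_\#$ (replace $\e_i$ by $\rho\e_i$; dilations of nonzero measures are nonzero). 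Hence (a) reduces to showing $(\tau_{-y})_\#\nu\in\Tan(\mu,x)$ for every $y\in\spt\nu$: then $F^{y,\rho}_\#\nu=F^{0,\rho}_\#((\tau_{-y})_\#\nu)\in\Tan(\mu,x)$, and dividing by the positive constant $F^{y,\rho}_\#|\nu|(K)=|\nu|(y+\rho K)$ — positive since $y\in\spt\nu$ and $y+\rho K$ is a neighbourhood of $y$, $K$ being a convex body with $0$ in its interior — yields the normalised statement.

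The crux is that, by the identities above, $(\tau_{-y})_\#\nu$ is a blow-up of $\mu$ with scales $\e_i\downarrow 0$ but \emph{moving} centres $x+\e_i y\to x$, the displacement being comparable to the scale, and that $0\in\spt\big((\tau_{-y})_\#\nu\big)=\spt\nu-y$ precisely because $y\in\spt\nu$. Here I would invoke the differentiation lemma underlying Mattila's Theorem~14.16 (see also \cite{rindler2018calculus}): there is a $|\mu|$-null set $N$ such that, for $x\notin N$, every nonzero local weak$^*$ limit $\lambda$ of a sequence $c_i^{-1}F^{x+\e_i b,\,\e_i}_\#\mu$ with $\e_i\downarrow 0$, $c_i>0$, $b$ fixed and $0\in\spt\lambda$ belongs to $\Tan(\mu,x)$. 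Its content — and the only reason a null set must be discarded — is that at a $|\mu|$-density point the total mass and the mass distribution of $|\mu|$ in $B(x+\e_i b,\,t\e_i)$ agree asymptotically, after normalisation, with those in the concentric $B(x,t\e_i)$; this is a Besicovitch covering plus Lebesgue--Besicovitch differentiation argument, and the vector structure rides along once the location of $|\mu|$-mass is controlled. Applying it with $b=y$ gives $(\tau_{-y})_\#\nu\in\Tan(\mu,x)$, which by the previous paragraph proves (a) for all $x\notin N$, all $\nu\in\Tan(\mu,x)$, all $y\in\spt\nu$, $\rho>0$ and convex bodies $K$.

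Finally, (b) follows from (a) by a diagonal extraction with no further exclusion of points. Fix $x\notin N$, $\nu\in\Tan(\mu,x)$, $y\in\spt\nu$ and $\lambda\in\Tan(\nu,y)$, say $\lambda=\wlim_j d_j^{-1}F^{y,\rho_j}_\#\nu$ with $\rho_j\downarrow 0$, $d_j>0$, $\lambda\neq0$. By (a), each $F^{y,\rho_j}_\#\nu$ is a positive multiple of an element of $\Tan(\mu,x)$, so $F^{y,\rho_j}_\#\nu=\wlim_i (c^{(j)}_i)^{-1}F^{x,r^{(j)}_i}_\#\mu$ for suitable $c^{(j)}_i>0$ and $r^{(j)}_i\downarrow 0$ as $i\to\infty$. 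As local weak$^*$ convergence of $V$-valued Radon measures is metrisable on each $\mathcal{M}(\overline{B(0,R)};V)$, pick $i(j)\to\infty$ with $r^{(j)}_{i(j)}\le 1/j$ and with $(c^{(j)}_{i(j)})^{-1}F^{x,r^{(j)}_{i(j)}}_\#\mu$ within $1/j$ of $F^{y,\rho_j}_\#\nu$ on $B(0,j)$ in a fixed metric inducing local weak$^*$ convergence; then $d_j^{-1}(c^{(j)}_{i(j)})^{-1}F^{x,r^{(j)}_{i(j)}}_\#\mu\wt\lambda$ with $r^{(j)}_{i(j)}\to0$, and $\lambda\neq0$ forces $\lambda\in\Tan(\mu,x)$, i.e. $\Tan(\nu,y)\subseteq\Tan(\mu,x)$. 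The single real obstacle is the differentiation lemma of the second paragraph — stability of the tangent structure under a scale-sized displacement of the centre at $|\mu|$-a.e. point; everything else is formal, so if a self-contained proof is wanted in place of a citation, that is the step to carry out in full.
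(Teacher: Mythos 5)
Your formal reductions are correct and reproduce the skeleton of the classical argument: the identities $F^{y,\rho}_{\#}F^{x,\e}_{\#}\sigma=F^{x+\e y,\,\e\rho}_{\#}\sigma$, the dilation invariance of $\Tan(\mu,x)$, the reduction of (a) to the re-centering statement $(\tau_{-y})_{\#}\nu\in\Tan(\mu,x)$ for $y\in\spt\nu$, and the diagonal extraction deducing (b) from (a) are all sound. Two small technical points: the normalisation in (a) needs $F^{y,\rho}_{\#}|\nu|(K)=|\nu|(y+\rho K)>0$, so you must take $K$ a convex body with $0$ in its interior (consistent with the paper's center-symmetric $K$, but not with the literal ``any convex set''); and in the diagonal step the tolerance should be chosen after multiplying by $d_j^{-1}$ (pick $i(j)$ so that $d_j^{-1}(c^{(j)}_{i(j)})^{-1}F^{x,r^{(j)}_{i(j)}}_{\#}\mu$ is within $1/j$ of $d_j^{-1}F^{y,\rho_j}_{\#}\nu$ on $B(0,j)$), otherwise the error $d_j^{-1}/j$ need not vanish when $d_j\to 0$.

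The substantive issue is that, as a proof, the proposal is circular at the only point that carries content: by your own identity $c_i^{-1}F^{x+\e_i y,\,\e_i}_{\#}\mu=(\tau_{-y})_{\#}\bigl(c_i^{-1}F^{x,\e_i}_{\#}\mu\bigr)$, the ``differentiation lemma'' you invoke is not an auxiliary fact but is exactly the re-centering property, i.e. statement (a) up to normalisation; the Besicovitch covering/density argument over countably many scales and test functions that produces the universal null set is cited, not proved, as you yourself acknowledge. This is not by itself a defect relative to the paper, since the paper does not reprove the theorem either: it quotes \cite[Theorem~14.16]{mattila1999geometry}. But the one point on which the paper does add something is the one you pass over with ``the vector structure rides along'': Mattila's theorem is stated for positive (scalar) Radon measures, whereas here $\mu$ is $V$-valued, and the paper's remark is that the extension follows from the structure of tangent spaces of vector measures (\cite[Theorem~2.44]{ambrosio2000functions}, \cite[Lemma~10.4]{rindler2018calculus}): at $|\mu|$-a.e.\ $x$ every $\nu\in\Tan(\mu,x)$ has the form $\nu=\frac{\d\mu}{\d|\mu|}(x)\,|\nu|$ with $|\nu|\in\Tan(|\mu|,x)$ and $\spt\nu=\spt|\nu|$, which transfers (a) and (b) from $|\mu|$ to $\mu$. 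If you want a self-contained treatment, either carry out the covering lemma directly for $V$-valued measures (where positivity is used in the classical proof) or, more economically, prove the scalar case and add the polar-transfer step the paper points to.
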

Note that the original result in \cite[Theorem~14.16]{mattila1999geometry} is proven for $k=1$. However, the good properties of tangent space of measures (i.e. \cite[Theorem~2.44]{ambrosio2000functions} or
\cite[Lemma~10.4]{rindler2018calculus}) allow us to immediately extend its validity for generic $k$. 
\smallskip

We now prove the central result, repeatedly used in our argument, that will allow us to take advantage of the notion of \text{strict blow-up} limit. 


\begin{proposition}
[Blow-ups of blow-ups are blow-ups]\label{lem:BB}
Let $\mathcal{A}$ be a $\mathbb{C}$-elliptic operator. Fix $K_1, K_2$ two open bounded convex sets and pick $u\in \BV^{\mathcal{A}}(\Omega;\R^m)$. For $|\mathcal{A}u|$-a.e. $x\in \Omega$, any $h\in\BV^{\mathcal{A}}(K_1;\R^m)$ that is an $L^1$-limit point of $\{u_{K_1,\e,x}\}_{\e>0}$ satisfies  the following property:
\begin{itemize}
	\item[•] for $|\AA h|-$a.e. $y\in K_1$ if $(g,\gamma_g) \in \mathrm{bu}_{K_2}(h;y)$ then $(g-\mathcal{R}_{K_2}[g],\gamma_g)\in \mathrm{bu}_{K_2}(u;x)$.
\end{itemize}
\end{proposition}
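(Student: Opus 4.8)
The plan is to invoke the principle ``tangent measures of tangent measures are tangent measures'' (Theorem~\ref{thm:Mattila}) exactly once, and then to rebuild a genuine blow-up sequence of $u$ at $x$ out of the resulting tangent measure by means of the Poincaré inequality, the compactness theorem, and the identity $\mathcal{R}_{K_2}[u_{K_2,\e,x}]=0$. The case $\mathcal{A}h=0$ is vacuous: since $\mathcal{R}_{K_1}$ is $L^1$-continuous, fixes $\mathrm{Ker}(\mathcal{A})$, and satisfies $\mathcal{R}_{K_1}[u_{K_1,\e,x}]=0$ for every $\e$, one gets $h=\mathcal{R}_{K_1}[h]=0$ and $|\mathcal{A}h|=0$. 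So assume $\mathcal{A}h\neq 0$ and fix $x$ in the full $|\mathcal{A}u|$-measure set on which Theorem~\ref{thm:Mattila} and Lemma~\ref{lem:unit mass conve} apply to $\mu=\mathcal{A}u$, on which $\frac{\d\mathcal{A}u}{\d|\mathcal{A}u|}(x)$ exists, and on which the blow-up rescalings $|K_1|^{-1}\mathcal{A}u_{K_1,r,x}$ are weak$^*$-precompact in $\mathcal{M}_{loc}(\R^n)$ along a suitable $r\downarrow0$. Let $\e_i\downarrow0$ realize $h$, i.e.\ $u_{K_1,\e_i,x}\to h$ in $L^1(K_1)$. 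Since $\mathcal{A}u_{K_1,\e_i,x}=|K_1|\,F^{x,\e_i}_{\#}\mathcal{A}u/F^{x,\e_i}_{\#}|\mathcal{A}u|(K_1)$ carries total mass exactly $|K_1|$ on the open set $K_1$, the $L^1$-convergence upgrades to $\mathcal{A}u_{K_1,\e_i,x}\wt\mathcal{A}h$ in $\mathcal{M}_{loc}(K_1)$, and after passing to a subsequence one obtains $\nu\in\Tan(\mathcal{A}u,x)$ with $\nu\mres K_1=\mathcal{A}h\mres K_1$. \emph{Producing this bona fide Preiss tangent measure $\nu$ of $\mathcal{A}u$ at $x$ extending $\mathcal{A}h$ on $K_1$ is the step I expect to be the main obstacle}: a priori the sequence $\e_i$ realizing $h$ need not be ``good'' at $x$, and it is precisely the precompactness condition on $x$ — available for $|\mathcal{A}u|$-a.e.\ $x$, since the relevant density ratios stay bounded along a subsequence — that rules this out.

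Once $\nu$ is at hand, Theorem~\ref{thm:Mattila}(b) yields $\Tan(\nu,y)\subseteq\Tan(\mathcal{A}u,x)$ for every $y\in\spt\nu$. Because $\nu$ and $\mathcal{A}h$ coincide in a neighbourhood of each $y\in K_1$, we have $\Tan(\nu,y)=\Tan(\mathcal{A}h,y)$ there, and $|\mathcal{A}h|$-a.e.\ $y\in K_1$ lies in $\spt\nu$; hence for $|\mathcal{A}h|$-a.e.\ $y\in K_1$ and every $(g,\gamma_g)\in\mathrm{bu}_{K_2}(h;y)$ one has $\gamma_g\in\Tan(\mathcal{A}h,y)\subseteq\Tan(\mathcal{A}u,x)$. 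Restricting $y$ further to the set where $\frac{\d\mathcal{A}h}{\d|\mathcal{A}h|}(y)$ exists, and using that $\mathcal{A}h=\frac{\d\mathcal{A}u}{\d|\mathcal{A}u|}(x)\,|\mathcal{A}h|$ on $K_1$, we get $\frac{\d\mathcal{A}h}{\d|\mathcal{A}h|}(y)=\frac{\d\mathcal{A}u}{\d|\mathcal{A}u|}(x)$, so the polar identity $\gamma_g=\frac{\d\mathcal{A}h}{\d|\mathcal{A}h|}(y)\,|\gamma_g|$ on $K_2$ coming from $\mathrm{bu}_{K_2}(h;y)$ reads $\gamma_g=\frac{\d\mathcal{A}u}{\d|\mathcal{A}u|}(x)\,|\gamma_g|$, as Definition~\ref{def:Strict} demands. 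The remaining data $|\gamma_g|(K_2)=|\gamma_g(K_2)|=1$, $|\gamma_g|(\partial K_2)=0$ and $\mathcal{A}g=\L^n(K_2)\,\gamma_g\mres K_2$ are intrinsic to $(g,\gamma_g)$, and $\mathcal{A}(g-\mathcal{R}_{K_2}[g])=\mathcal{A}g$ since $\mathcal{R}_{K_2}[g]\in\mathrm{Ker}(\mathcal{A})$.

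It remains to realize $(g-\mathcal{R}_{K_2}[g],\gamma_g)$ along a single vanishing sequence. Since $\gamma_g\in\Tan(\mathcal{A}u,x)$ has $|\gamma_g|(\partial K_2)=0$, $|\gamma_g|(K_2)=1$ and constant polar (as all tangent measures at $x$ do), the normalization argument in the proof of Lemma~\ref{lem:unit mass conve} provides $\delta_j\downarrow0$ with $\L^n(K_2)^{-1}\mathcal{A}u_{K_2,\delta_j,x}\wt\gamma_g$ in $\mathcal{M}(\overline{K_2})$. As $\mathcal{R}_{K_2}[u_{K_2,\delta_j,x}]=0$, Proposition~\ref{prop:poincare} gives $\|u_{K_2,\delta_j,x}\|_{L^1(K_2)}\le c\,|\mathcal{A}u_{K_2,\delta_j,x}|(K_2)=c\,\L^n(K_2)$, so by Theorem~\ref{thm:cmp} a subsequence converges in $L^1(K_2)$ to some $v\in\BV^{\mathcal{A}}(K_2)$ with $\mathcal{A}u_{K_2,\delta_j,x}\wt\mathcal{A}v$; comparing the two weak$^*$ limits, $\mathcal{A}v=\L^n(K_2)\,\gamma_g\mres K_2=\mathcal{A}g$, whence $v-g\in\mathrm{Ker}(\mathcal{A})$. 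Applying $\mathcal{R}_{K_2}$ (continuous, fixing $\mathrm{Ker}(\mathcal{A})$) gives $0=\mathcal{R}_{K_2}[v]$ and $v-g=\mathcal{R}_{K_2}[v-g]=-\mathcal{R}_{K_2}[g]$, i.e.\ $v=g-\mathcal{R}_{K_2}[g]$; since the limit is independent of the subsequence, $u_{K_2,\delta_j,x}\to g-\mathcal{R}_{K_2}[g]$ in $L^1(K_2)$, and $|\mathcal{A}u_{K_2,\delta_j,x}|(K_2)=\L^n(K_2)=|\mathcal{A}(g-\mathcal{R}_{K_2}[g])|(K_2)$ makes the convergence strict. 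Matching the collected items against Definition~\ref{def:Strict} yields $(g-\mathcal{R}_{K_2}[g],\gamma_g)\in\mathrm{bu}_{K_2}(u;x)$, as claimed.
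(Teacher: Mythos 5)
Your argument is correct and follows essentially the same route as the paper's proof: both pass through Theorem \ref{thm:Mattila} to place $\gamma_g$ (up to the factor $\L^n(K_2)$) in $\Tan(\mathcal{A}u,x)$, re-normalize by $F^{x,\rho}_{\#}|\mathcal{A}u|(K_2)$ using $|\gamma_g|(\partial K_2)=0$, and then identify the $L^1$-limit of $u_{K_2,\rho_i,x}$ via the Poincaré inequality, the compactness theorem, and the fact that $\mathcal{R}_{K_2}$ is continuous, fixes $\mathrm{Ker}(\mathcal{A})$, and annihilates the blow-up sequence, yielding the limit $g-\mathcal{R}_{K_2}[g]$. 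Your only deviations --- upgrading $\mathcal{A}h$ (a measure on $K_1$ only) to a genuine tangent measure $\nu\in\Tan(\mathcal{A}u,x)$ before invoking part (b), treating the trivial case $\mathcal{A}h=0$, and spelling out the polar and strictness verifications --- are minor refinements of steps the paper asserts directly (namely $\mathcal{A}h\in\Tan(\mathcal{A}u,x)$) or leaves implicit.
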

\begin{remark}
Let us remark the role of $K_1$ and $K_2$. In principle $h$ is an $L^1$ blow-up of $u$, $\{u_{K_1,\e,x}\}_{\e>0}$ along the convex $K_1$ and $g$ is a strict blow-up of $h$ along the convex set $K_2$, $\{h_{K_2,\delta,y}\}_{\delta>0}$. The above Proposition in particular says that not only $g$ is a blow-up of $u$ but can also be reached as a sequence along $K_2$: $\{u_{K_2,\e_i,x}\}_{i\in \N}$. In particular we have the freedom to choose the convex set in doing the linearization procedure.
\end{remark}
\begin{proof}
We pick an $x\in \Omega$ for which  Theorem \ref{thm:Mattila} is in force with respect to the measure $\mathcal{A}u$ (which is $|\mathcal{A}u|$ almost everywhere in $\Omega$). Consider $h\in \BV^{\mathcal{A}}(K_1;\R^m)$ an $L^1$-limit point of $\{u_{K_1,\e,x}\}_{\e>0}$ and note that we can find a vanishing sequence $\{\e_i\}_{i\in \N}$ such that (in $L^1$)
	\[
	u_{K_1,\e_i,x}\rightarrow h.
	\] 
Notice that this means
	\[
	\mathcal{A}u_{K_1,\e_i,x}\wt \mathcal{A}h \ \ \ \text{in $\mathcal{M}(K_1;V)$}
	\]
and it also means
	\[
\L^n(K_1) \frac{F^{x,\e_i}_{\# } \mathcal{A}u}{F^{x,\e_i}_{\# } |\mathcal{A}u|(K_1)}=\mathcal{A}u_{K_1,\e_i,x} \wt \mathcal{A}h.
	\]
But then we have $\mathcal{A}h\in \mathrm{Tan}(\mathcal{A}u,x)$. In particular, because of Theorem \ref{thm:Mattila}, for all $y\in \spt(\mathcal{A}h)$, we can infer $\mathrm{Tan}(\mathcal{A}h,y)\subset \mathrm{Tan}(\mathcal{A}u,x)$. Fix one of such $y$ for which also Lemma \ref{lem:BUnotEmpty} holds (which are $|\AA h|$-a.e. $y\in K_1$) and notice that, for $(g,\gamma_g)\in \mathrm{bu}_{K_2}(h,y)$ we  can find a vanishing sequence $\{\delta_i\}_{i\in \N}$ such that
	\[
	h_{K_2,\delta_i,y}\rightarrow g
	\]
strictly in $\BV^{\mathcal{A}}(K_2;\R^n)$ and
	\[
	\frac{1}{\L^n(K_2)}\mathcal{A}h_{K_2,\delta_i,y}\wt \gamma_g \ \ \ \text{in $\mathcal{M}(\bar{K}_2;V)$}
	\]
and thus
	\[
\L^n(K_2) \frac{F^{y,\delta_i}_{\# } \mathcal{A}h}{F^{y,\delta_i}_{\# } |\mathcal{A}h|(K_2)}=\mathcal{A}h_{K_2,\delta_i,y} \wt \L^n(K_2) \gamma_g \ \ \  \text{in $\mathcal{M}(\bar{K}_2;V)$}
	\]
implying $\L^n(K_2)\gamma_g \in \mathrm{Tan}(\mathcal{A}h,y)$. We now know that the choice of $y$ ensures $\L^n(K_2)\gamma_g \in \mathrm{Tan}(\mathcal{A}h,y)\subset \mathrm{Tan}(\mathcal{A}u,x)$ and therefore it must exists a sequence $\{c_i\}_{i\in \N}$ and a vanishing sequence $\{\rho_i\}_{i\in \N}$ such that
	\[
	c_i F^{x,\rho_i}_{\# } \mathcal{A}u \wt \L^n(K_2)\gamma_g.
	 \]
Since $|\gamma_g|(\partial K_2)=0$ we have
	\[
	c_i F^{x,\rho_i}_{\# } |\mathcal{A}u|(K_2)\rightarrow \L^n(K_2)|\gamma_g|(K_2)=\L^n(K_2)
	\]
implying that
	\begin{equation}\label{gamm}
	\L^n(K_2) \frac{ F^{x,\rho_i}_{\# } \mathcal{A}u }{ F^{x,\rho_i}_{\# } |\mathcal{A}u |(K_2)}\wt \L^n(K_2) \gamma_g \ \ \ \text{in $\mathcal{M}(\bar{K_2};V)$}.
	\end{equation}
Since $\mathcal{A}g=\L^n(K_2) \gamma_g$ on $K_2$ we obtain  
	\[
 	\L^n(K_2)   \frac{ F^{x,\rho_i}_{\# } \mathcal{A}u }{ F^{x,\rho_i}_{\# } |\mathcal{A}u |(K_2)}\wt \mathcal{A}g\ \ \  \text{on $\mathcal{M}(K_2;V)$}.
	\]
Consider now $u_{K_2,\rho_i,x}$ and notice that 
\[
\| u_{K_2,\rho_i,x} \|_{L^{1}(K_2;\R^m)}\leq  c |\mathcal{A}u_{K_2,\rho_i,x}|(K_2)=c \L^n(K_2)
\]
and thus, up to extract a subsequence (not relabeled) we can infer $u_{K_2,\rho_i,x}\rightarrow \bar{g}\in \BV^{\mathcal{A}}(K_2;\R^m)$. In particular we also can deduce by this convergence that
	\[
	\mathcal{A}g=\mathcal{A}\bar{g} \ \ \ \Rightarrow \ \ \ g=\bar{g}+L \ \ \text{for some $L\in \mathrm{Ker}(\mathcal{A})$}.
	\]
This, combined with \eqref{gamm} implies that $(g-L,\gamma_g)=(\bar{g},\gamma_g)\in \mathrm{bu}_{K_2}(u;x)$. To identify $L$ we just use the fact that $\mathcal{R}_{K_2}$ is linear, continuous, keeps $\mathrm{Ker}(\mathcal{A})$ fixed and  $\mathcal{R}_{K_2}[u_{K_2,\rho_i,x}]=0$ for all $i\in \N$ yielding $	\mathcal{R}_{K_2}[g-L]= \mathcal{R}_{K_2}[\bar{g}]=0$. Thus
	\[
	\mathcal{R}_{K_2}[g]=L.
	\]
\end{proof}

\section{Application}\label{sct:app}

For a function $u$ differentiable at $x$ the following holds
$$
\frac{u(x+\e y)-u(x)}{\e |\nabla u(x) |}\stackrel{\e\to0}{\longrightarrow}\frac{\nabla u(x)}{ |\nabla u(x) |}y.
$$
This relation also holds at points of approximate differentiability of a $\BV$ function, where the gradient is understood as the approximate gradient \cite{ambrosio2000functions}. In this case, since $\frac{|D u|(K_{\e}(x) )}{|K_\e(x)|}\stackrel{\e\to0}{\longrightarrow} |\nabla u(x) |$, we can write the equivalent
relation
$$
u_{K,\e,x}(y)=\frac{u(x+\e y) - \left(u\right)_K}{\frac{|D u|(K_{\e}(x) )}{|K|\e^{n-1}}}\stackrel{\e\to0}{\longrightarrow}\frac{dD^au}{d|D^au|}(x)y=\frac{\nabla u(x)}{|\nabla u(x)|} y,
$$
where $\left(\cdot\right)_K$ denotes the average over the convex set $K$. 
The same holds $\L^d$-a.e. for function with bounded deformation (cf. \cite{ambrosio1997fine}).\\

In general, for a function with bounded $\AA$-variation, approximate differentiability implies (cf. for instance \cite{gmeineder2017critical}) that
$$
u_{K,\e,x}(y)=\frac{u(x+\e y) - \mathcal R_K[u](y)}{\frac{|\AA u|(K_{\e}(x) )}{|K|\e^{n-1}}}\stackrel{\e\to0}{\longrightarrow}\frac{d\AA^a  u}{d|\AA^a u|}(x)y,
$$
at $\L^{n}$-a.e. point. Additionally, using the definition of \textit{jump points}, it is quite a standard argument to verify that, for $\H^{n-1}$-a.e. $x\in J_u$, we have $u_{K,\e,x}(y)\stackrel{\e\to0}{\longrightarrow}H_{u^+(x),u^-(x),\nu(x)}(y)$ where
\begin{equation}
   H_{u^+,u^-,\nu}(y):=\left\{
   \begin{array}{ll}
u^+ \  &\text{if $y\cdot \nu>0$}\\
u^- \ &\text{if $y\cdot \nu<0$}.
    \end{array}\right.\nonumber
\end{equation}
By means of iterative blow-ups we will show that also at Cantor points a sequence of $\e$ can be selected to converge to an affine function capturing the local behavior of the polar vector of the gradient measure, in the case $\AA=D$ ($\BV$-functions) and $\AA=\mathcal E$ ($BD$-functions). Notice that, unlike the case of approximate differentiable points and jump points, where the blow-ups converge to a unique limit, at Cantor points we can only select a good sequence along which the blow-ups converge to a good limit. This is due to the highly irregular structure that the blow-ups can have around Cantor-type points. However, in typical homogenization and relaxation results, having one good blow-up is already enough for the applications.\\

The iterated blow-ups strategy can be described as a repeated application of Proposition \ref{lem:BB}, a rigidity structure results (as the one stated in Proposition \ref{prop:rindlerchar}), and the structure of the operator $\mathcal{R}$ appearing in the Poincaré inequality. For instance, considering the case $\AA=\E$ the general strategy (following the milestones in \cite{caroccia2019integral} and \cite{de2019fine}) can be illustrated like this:
 \begin{itemize}
     \item[1)] If $x$ is a point of approximate differentiability or a jump point, then blow-ups are given by the approximate differential $e(u)(x)y$ or by the jump function $H$;
     \item[2)] If $x$ is neither a jump point nor an approximate differentiability point then, by \cite{de2016structure}, we have that $    \frac{\d \E^s u}{\d |\E^s u|}$ must have a precise shape (i.e., it belongs to the wave cone of the annihilator of $\E$);
    \begin{itemize}
        \item[2.1)] Then we consider a blow-up $v$, $L^1$-limit of $u_{K,\e_i,x}$. This blow-up will have the property of satisfying
     \[
     \E v= \frac{\eta \odot \xi }{|\eta \odot \xi| }|\E  v| \ \ \text{on $K$};
     \]
     \item[2.2)] The rigidity result in Proposition \ref{prop:rindlerchar} will now yield information on the shape of $v$. In particular (in the $\BD$ case) we have 
     \[
     v(x)=\eta h_1(x\cdot \xi)+\xi h_2(x\cdot \eta)+L(x);
     \]  
     \item[2.3)] If $x\notin \mathrm{TS}(u)$, by selecting a specific point on the domain we perform a second blow-up on $v$, which linearizes both directions, resulting in one blow up of the form 
     \[
     w(y)=(\kappa_1\xi\otimes \eta +  \kappa_2 \eta \otimes \xi)y;
     \]
     \item[2.4)] By employing the information on $\mathcal{R}$ (with a further application of the rigidity result if $\eta$ and $\xi$ are parallel) the constant $\kappa_i$ is proven to be $\kappa_i  =\frac{1}{2|\eta\odot \xi|}$ yielding
     \[
     g(y)=\frac{\eta \odot \xi}{|\eta\odot \xi|} y=\frac{\d \E u}{\d |\E u|}(x) y ;
     \]
     \item[2.5)] By employing Proposition \ref{lem:BB} we conclude that $g$ can be obtained as the strict limit of  $u_{K,\tilde{\e}_i,x} $ for some $\tilde{\e} _i \downarrow 0$.
    \end{itemize}
 \end{itemize}
In particular we show that (excluding the small family of \textit{totally singular} points that we identify in step 2.3) almost all Cantor points must have at least a blow-up which is $\frac{\d \E u}{\d |\E u|}(x) y $. If a points $x\in \mathrm{TS}(u)\setminus J_u$ - still by applying Proposition \ref{lem:BB} and given the rigidity result - we can always provide a blow-up affine at least in one direction as done in \cite{caroccia2019integral}. With these information at hand, it is now easy to proceed to relaxation and integral representation. Note that the final affine shape is not a surprise since the blow-ups are defined by removing the lower order terms $\mathcal{R}[u(x+\e \cdot)]$ (and this is crucial in giving the limiting affine shape; otherwise some contribution from $\mathcal{R}[u(x+\e \cdot)]$ might arise, requiring much more knowledge of the pointwise behavior of $u$ around $x$). \\

The excluded set $\mathrm{TS}(u)$ is made by those point $x$ for which all the $L^1$ blow-ups $h$ are singular: $\E h=\E^s h$. This is a very specific property and brings to a natural question: how does a function $u$ look if all its blow-ups have zero absolutely continuous part and can be expressed as sums of one-dimensional functions?\\
 
Generally speaking, the above strategy works also for a generic $\CC$-elliptic operator $\AA$. Indeed the crucial ingredients are: a rigidity result describing the structure of functions with constant polar $\AA u = P |\AA u|$, and a specific choice of the application $\mathcal{R}$ appearing in the Poincaré inequality \ref{Poincare}. While the choice of $\mathcal{R}$ can be done in rather general contexts, the rigidity structure seems, instead, specific to each $\AA$ and that would be where the main difficulties we believe to rely. Nonetheless, we see in this strategy a fruitful starting point for pursuing homogenization and integral representation results at least for some specific operators, such as the deviatoric strain $\E_{\mathrm{dev}} u:= \E u - \frac{\dive(u)}{d}\Id$, whose rigidity result will be published in a subsequent work by the authors.

\subsection{Affine blow-ups in $\BV$}
We conclude by showing a similar statement in the context of $\BV$ maps.
 \begin{definition}[Totally singular points for $\BV$ maps]\label{def:TSpoinBV}
For $u\in \BV(\Omega;\R^n)$, consider the blow-up sequences
\[
	u_{K,\e,x}(y):=\frac{u(x+\e  y)-(u)_{K_{\e}(x)}}{\frac{|Du|(K_{\e  }(x))}{|K|\e^{n-1}}}.
	\]
A point $x\in\Om$ is said $\BV$-totally singular for $u\in \BV(\Om;\R^m)$ if for any $L^1$ limit points $p\in \BV(K;\R^m)$ of $\{u_{K,x,\e} \}_{\e>0}$ it holds $D p=D^s p$. We denote by $\mathrm{TS}(u)$ the set of points which are $\BV$-totally singular for $u$.
 \end{definition}
 
\begin{theorem}\label{thm:mainBV}
Let $n\geq 2$ and $u\in \BV(\Omega;\R^m)$. Then, for any convex set with $K=-K$ and for $|D^c u|$-a.e. $x\in \Omega\setminus \mathrm{TS}(u)$ there exists a vanishing sequence $ \e_i \downarrow 0 $ such that
 	\[
	u_{K,\e_i,x}(y)\rightarrow  \frac{\d D u}{\d |D u| } (x) y 
	\]
strictly on $\BV(K;\R^m)$.
\end{theorem}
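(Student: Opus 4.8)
The plan is to run the iterated blow-up scheme of Section~\ref{sct:app} for $\AA=D$, used with $K_1=K_2=K$, paralleling the $\BD$ case. First I isolate the good base points. Let $N\subset\Om$ be the $|Du|$-negligible set outside of which: (i) the polar is rank-one, $\frac{\d D u}{\d |D u| }(x)=a\otimes b$ with $a\in\R^m$, $b\in\R^n$, $|a|=|b|=1$ (this is \eqref{eqn:rankOneBV}); (ii) $\mathrm{bu}_K(u;x)\neq\emptyset$ (Lemma~\ref{lem:BUnotEmpty}, using $K=-K$); (iii) Theorem~\ref{thm:Mattila} is in force for $Du$ at $x$, so that Proposition~\ref{lem:BB} applies at $x$. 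Since $|D^cu|$ is absolutely continuous with respect to $|Du|$, it suffices to argue for a fixed $x\in(\Om\setminus\mathrm{TS}(u))\setminus N$; write $P:=\frac{\d D u}{\d |D u| }(x)=a\otimes b$, so $|P|=1$.

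First blow-up and rigidity. Since $x\notin\mathrm{TS}(u)$, Definition~\ref{def:TSpoinBV} gives an $L^1$-limit point $p\in\BV(K;\R^m)$ of $\{u_{K,\e,x}\}_{\e>0}$ with $Dp\neq D^s p$, i.e. whose approximate gradient $\nabla p$ does not vanish $\L^n$-a.e. on $K$. By \eqref{eqn:derBU}, $Dp=P\,|Dp|$ on $K$, so $p$ has constant rank-one polar $a\otimes b$; invoking the rigidity result Proposition~\ref{prop:rindlerchar} I obtain the one-dimensional profile $p(y)=a\,\Phi(b\cdot y)+c$ for some $\Phi\in\BV(\R;\R)$ and a constant $c\in\R^m$. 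In particular $\nabla p(y)=(a\otimes b)\,\Phi'(b\cdot y)$ for $\L^n$-a.e. $y\in K$, with $\Phi'$ the density of the absolutely continuous part of $D\Phi$; and the hypothesis $Dp\neq D^s p$ now reads $\L^1(E)>0$, where $E:=\{t\in\R:\Phi'(t)\neq0\}$.

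Second blow-up at a good interior point. The slab $S:=\{y\in K: b\cdot y\in E\}$ has positive $\L^n$-measure, so I may pick $y_0\in S$ that is, in addition, a point of approximate differentiability of $p$ at which $|D^s p|$ has vanishing density and at which Proposition~\ref{lem:BB} applies (an $\L^n$-a.e. condition on $S$, since on $S$ the measures $\L^n$ and $|Dp|$ are equivalent). Then $\nabla p(y_0)=\Phi'(b\cdot y_0)\,P\neq0$, and the standard blow-up at points of approximate differentiability recalled in Section~\ref{sct:app} gives
\[
p_{K,\delta,y_0}(z)\ \longrightarrow\ g(z):=\frac{\nabla p(y_0)}{|\nabla p(y_0)|}\,z=\frac{\Phi'(b\cdot y_0)\,P}{\Phi'(b\cdot y_0)\,|P|}\,z=P\,z\qquad\text{ in } L^1(K;\R^m)
\]
as $\delta\downarrow0$, using $|P|=1$. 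Since $|D p_{K,\delta,y_0}|(K)=\L^n(K)$ for every $\delta$ by construction of the blow-up, while $|Dg|(K)=|P|\,\L^n(K)=\L^n(K)$, lower semicontinuity forces $|D p_{K,\delta,y_0}|(K)\to|Dg|(K)$, i.e. the convergence is strict; the corresponding tangent measure is $\gamma_g:=\L^n(K)^{-1}P\,\L^n$, which satisfies $|\gamma_g|(K)=|\gamma_g(K)|=1$, $|\gamma_g|(\partial K)=0$, $\gamma_g=\frac{\d Dp}{\d|Dp|}(y_0)\,|\gamma_g|$ and $Dg=\L^n(K)\,\gamma_g\llcorner_{K}$. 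Hence $(g,\gamma_g)\in\mathrm{bu}_K(p;y_0)$.

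Transfer and conclusion. Apply Proposition~\ref{lem:BB} with $K_1=K_2=K$, the $L^1$-limit point $h=p$, and the point $y_0$: it yields $(g-\mathcal{R}_K[g],\gamma_g)\in\mathrm{bu}_K(u;x)$. In the $\BV$ setting $\mathcal{R}_K$ is the average over $K$, and since $g$ is linear and $K=-K$, $\mathcal{R}_K[g]=(g)_K=P\cdot 0=0$; therefore $(g,\gamma_g)\in\mathrm{bu}_K(u;x)$. By Definition~\ref{def:Strict} this means there is a vanishing sequence $\e_i\downarrow0$ with $u_{K,\e_i,x}\to g$ strictly in $\BV(K;\R^m)$, and $g(y)=Py=\frac{\d D u}{\d |D u| }(x)y$, which is the claim. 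The main obstacle is the rigidity input of Proposition~\ref{prop:rindlerchar}: all the rest is bookkeeping with the blow-up normalizations, but it is exactly the passage from ``constant rank-one polar $a\otimes b$ on $K$'' to the profile $p(y)=a\Phi(b\cdot y)+c$ that makes the linearizing second blow-up possible --- and it is there that the exclusion of $\mathrm{TS}(u)$ is essential, for if every $L^1$-limit point $p$ had $Dp=D^s p$ then $\Phi'\equiv0$ and no point $y_0$ with $\nabla p(y_0)\neq0$ would be available to run the second blow-up.
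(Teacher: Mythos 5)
Your proposal is correct and follows essentially the same route as the paper's proof: rigidity of the first blow-up to a one-directional profile $p(y)=a\,\Phi(b\cdot y)+c$, a second blow-up at an approximate-differentiability point $y_0$ with $\Phi'(b\cdot y_0)\neq 0$, transfer back to $x$ via Proposition~\ref{lem:BB}, and identification of the limit through the normalization $|Dp_{K,\delta,y_0}|(K)=\L^n(K)$. One caveat: the rigidity input should not be Proposition~\ref{prop:rindlerchar}, which is the $\BD$ statement (it does not apply to $\R^m$-valued maps and, even when $m=n$, only yields the weaker two-profile decomposition); what is needed is the $\BV$ rigidity for a constant rank-one polar, which the paper cites from \cite{ambrosio2000functions}. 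Note also that on $E$ one actually has $\Phi'>0$, not merely $\Phi'\neq 0$ --- this is what gives $\nabla p(y_0)/|\nabla p(y_0)|=P$ rather than $-P$, and it follows from the constant polar since $\nabla p=P\,\frac{\d |Dp|}{\d \L^n}$ with a nonnegative density.
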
 

The proof of Theorem \ref{thm:mainBV} it is an easy application of Proposition \ref{lem:BB}.
\begin{proof}[Proof of Theorem \ref{thm:mainBV}]
Having chosen a Cantor point $x$ such that
	\[
	\frac{\d Du }{\d |Du|}(x)=\frac{\eta\otimes \xi}{|\eta\otimes \xi|},
	\]
(which are $|D^c u|$ almost all in $\Omega$) thanks to the rigidity of $\BV$ blow-ups (see for instance \cite{ambrosio2000functions}) we can infer that any $\BV$ limit point $p$ of $\{u_{K,\e,x}\}_{\e>0}$ has the shape
	\[
	p(y)=\psi(y\cdot \eta)\xi+c
	\] 
for some $c,\xi\in \R^m$, $\eta\in \R^n$ and $\psi\in \BV_{loc}(\R)$. Since $x\notin \mathrm{TS}(u)$ we can find at least one $\BV$ limit point with a $\psi$ such that $\psi'\neq 0$ on a set of positive Lebesgue measure. Then, on setting $\Psi(z):=\psi(z\cdot \eta)\xi$ we have $\Psi'\neq 0$ on a set with positive Lebesgue measure. We  thus can select a point $y\in K$ such that
 	\begin{enumerate}
 	\item[a)] $y \in  \spt(Dp)$ with $y\cdot \eta$ a point of approximate differentiability for $\psi$ and a Lebesgue point for $y\mapsto \frac{\d D p}{\d|D p|}(y)$;
 			\item[b)] $\frac{\Psi(y+\rho z ) - (\Psi)_{ K_{\rho}(y)} }{\rho}\rightarrow \beta_1 (\eta \otimes \xi )z+c$ in $L^1(K)$ for some $\beta_1\in \R$, $c\in \R^m$;
 	\item[c)] $\frac{|Dp|(K_{\rho}(y))}{\L^n(K_{\rho}(y))}\rightarrow  \beta_3>0$.
 	\end{enumerate}
We now pick $(g,\gamma_g)\in \mathrm{bu}_K(p;y)$ (which is not empty thanks to Lemma \ref{lem:BUnotEmpty}). Notice that by applying Proposition \ref{lem:BB} we have $(g-(g)_{K},\gamma)\in \mathrm{bu}_K(u;x)$. Recall that for $\BV$: 
\[
\mathcal{R}_K(w)=(w)_K:=\fint_K w\d x.
\]
In particular $g-(g)_{K}$ can be reached as a $\BV$-strict limit point of $\{u_{K,\e,x}\}_{\e>0}$. It is now enough to characterize $g-(g)_{K}$ as a blow-up of $p$.\\

\textit{Characterization of the blow-up $g$:} notice that
	\begin{align*}
	p_{K,\rho_i,y}(z)=\frac{\Psi(y+\rho_i z)   - (\Psi (y+\rho_i\cdot))_{K}}{\frac{|D  p|(K_{\rho_i}(y))}{|K|\rho_i^{n-1}}}.
	\end{align*}
Since 
\[
(\Psi (y+\rho_i\cdot))_{K}=(\Psi)_{K_{\varrho_i}(y)}
\]
we just conclude that
\begin{align*}
	p_{K,\rho_i,y}(z)=\frac{\Psi(y+\rho_i z)   - (\Psi (y+\rho_i\cdot))_{K}}{\frac{|D  p|(K_{\rho_i}(y))}{|K|\rho_i^{n-1}}}=\frac{\Psi(y+\rho_i z)   - (\Psi )_{K_{\varrho_i}(y)}}{\varrho_i\frac{|D  p|(K_{\rho_i}(y))}{|K|\rho_i^{n}}}\rightarrow \frac{\beta_1}{\beta_3}(\eta \otimes \xi) z + c 
	\end{align*}
in $L^1(K;\R^m)$. Since $g(z)-(g)_{K}\in \mathrm{bu}_K(u;x)$ and $(g)_K=c$ we get 
\[
g(z)-(g)_K=\frac{\beta_1}{\beta_3}(\eta \otimes \xi) z.
\]
Also we have strict convergence of the blow-ups and this implies that $|D g|(K)=|D p_{K,\varrho_i,x}|(K)=\L^n(K)$ implying
\[
\frac{\beta_1}{\beta_3}=\frac{1}{|\eta\otimes \xi|}
\]
and thus
\[
g(z)-(g)_{K}=\left(\frac{\eta\otimes \xi}{|\eta\otimes \xi|}\right)z=\frac{\d Du}{\d|Du|}(x) z \in \mathrm{bu}_K(u;x).
 	\]
\end{proof}

\subsection{Affine blow-ups in $\BD$}

Beyond the structure Theorems given in Subsection \ref{sbs:structure}, we can refer - for $\E$ - to the fine properties obtained in \cite{ambrosio1997fine}. For $u\in \BD(\Omega)$ the following well- known splitting is in force
\[
\E u= e(u)\L^n + [u]\odot \nu_{u} \H^{n-1}\llcorner_{J_u}+\E^c u
\]
where $e(u)(x):=\frac{d\E u}{\d \L^n}(x)$ (which can be computed as $e(u)(x)=\frac{\nabla u(x)+\nabla u(x)^t}{2}$, being $\nabla u(x)$ the approximate differential of $u$ at $x$, existing $\L^n$-a.e. on $\Omega$ - cf. \eqref{eqn:appdiff}),  $J_u$ the \textit{jump set of $u$}, $\nu_u$ a unitary vector field normal to $J_u$, $[u]:=u^+-u^-$ the jump set, $a\odot b:=\frac{a\otimes b + b\otimes a}{2}$ and $\E^c u$ stands for the cantor part of the measure $\E u$, supported on $C_u$:
\begin{equation}\label{e:Cu}
C_u:=\Big\{x\in\Omega\setminus S_u :\,\textstyle{\lim_{r\downarrow 0}\frac{|Eu|(B_r(x))}{r^n}=+\infty,\,
\lim_{r\downarrow 0}\frac{|Eu|(B_r(x))}{r^{n-1}}=0}\Big\}.
\end{equation}
Recall also that (\cite[Theorem 6.1]{ambrosio1997fine}) it holds 
\[
|\E u|(S_u\setminus J_u)=0.
\]

As a consequence of the more general result in \cite{de2016structure}, described in Subsection \ref{sbs:ann} for $|\E^c u|-$a.e. $x\in \Omega$ we have
\[
\frac{\d \E^c u}{\d| \E^c u|}(x)= \frac{\eta(x)\odot \xi (x) }{|\eta(x)\odot \xi (x) |} 
\]
for some Borel $|\E^c u|$-measurable vector fields $\eta,\xi:\Omega\rightarrow \R^n$.\\

We report a rigidity result for $\BD$ maps with constant polar vector. The result can be found in \cite[Theorem~2.10 (i)-(ii)]{de2019fine} (see also \cite[Proposition 3.9]{caroccia2019integral}).

 \begin{proposition}[Rigidity] \label{prop:rindlerchar}
 If $w\in \BD_{loc}(\R^n)$ is such that for some $\eta,\,\xi\in\R^n$
	\begin{equation}\label{e:cantordec}
	\E w= \frac{\eta\odot \xi}{|\eta\odot \xi|}|\E w|,
	\end{equation}
then 
\begin{itemize}
 \item[(i)] if $\eta\neq\pm\xi$ 
 	\[
	w(y)=\alpha_1(y\cdot \xi)\eta+\alpha_2(y\cdot \eta)\xi+\mathrm{L} y+\mathrm{v},
	\]
for some $\alpha_1,\alpha_2\in \BV_{loc}(\R)$, $\mathrm{L}\in \mathbb{M}_{skew}^{n\times n}$, $\mathrm{v}\in \R^n$;

\item[(ii)] if $\eta=\pm\xi$
 	\[
	w(y)=\alpha(y\cdot \eta)\eta+\mathrm{L} y+\mathrm{v},
	\]
for some $\alpha\in \BV_{loc}(\R)$, $\mathrm{L}\in \mathbb{M}_{skew}^{n\times n}$, $\mathrm{v}\in \R^n$.
\end{itemize}
\end{proposition}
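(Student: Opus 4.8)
\emph{Strategy.} Adapt the coordinates to $\eta,\xi$; observe that \eqref{e:cantordec} forces most components of $\E w$ to vanish; reduce to the plane $P:=\mathrm{span}(\eta,\xi)$; solve there by a perpendicular‑profile argument on the mollifications of $w$; pass to the limit. In detail: write $A:=\frac{\eta\odot\xi}{|\eta\odot\xi|}$, so that \eqref{e:cantordec} reads $\E w=A\,|\E w|$ and, componentwise, $(\E w)_{ij}=A_{ij}\,|\E w|$. Pick an orthonormal basis $e_1,\dots,e_n$ with $e_1\parallel\eta$ and, in case (i), $e_2$ so that $\xi\in\mathrm{span}(e_1,e_2)$ (in case (ii) one already has $\xi\parallel e_1$), the remaining vectors spanning $P^{\perp}$. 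Since neither $\eta$ nor $\xi$ has a component outside $\mathrm{span}(e_1,e_2)$ (resp.\ $\mathrm{span}(e_1)$), a direct computation gives $A_{ij}=0$ unless $\{i,j\}\subseteq\{1,2\}$ (resp.\ unless $i=j=1$); hence every component of $\E w$ outside that block vanishes as a measure, so morally $\E w$ has image in $P$ and annihilates $P^{\perp}$, and $w$ ought to be affine along $P^{\perp}$.

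\emph{Smooth model and the perpendicular profiles.} Set $w^{\e}:=w*\rho_{\e}$ and $\mu^{\e}:=|\E w|*\rho_{\e}\geq0$; then $w^{\e},\mu^{\e}\in C^{\infty}(\R^n)$, $\mu^{\e}\geq0$ on all of $\R^n$, and $\E w^{\e}=A\mu^{\e}$. For smooth fields the identity $\partial_{\zeta}(\zeta\cdot w^{\e})=(\zeta\cdot\eta)(\zeta\cdot\xi)\,\mu^{\e}$ holds for every direction $\zeta$. Choosing $\zeta\in P^{\perp}$ kills the right‑hand side, so $\zeta\cdot w^{\e}$ is constant along $\zeta$‑lines and $\E w^{\e}$ vanishes on $P^{\perp}\times P^{\perp}$; the elementary rigidity ``$\E v=0\Rightarrow v$ affine with skew‑symmetric gradient'' (used on the $P^{\perp}$‑slices), together with the vanishing of the mixed $P$/$P^{\perp}$ components and the positivity of $\mu^{\e}$ on all of $\R^n$ (which rules out polynomial growth in the $P^{\perp}$‑variables), forces $w^{\e}=\bar w^{\e}+L^{\e}y+v^{\e}$, with $\bar w^{\e}$ valued in $P$ and depending only on the two $P$‑coordinates, $L^{\e}\in\mathbb{M}_{skew}^{n\times n}$, $v^{\e}\in\R^n$; the skew‑symmetry of $L^{\e}$ is automatic because $\E$ of the profile part already equals $\E w^{\e}$. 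On $P$, in case (i) ($\eta\neq\pm\xi$), applying the identity with $\zeta=\xi^{\perp}$ and with $\zeta=\eta^{\perp}$ (the vectors of $P$ orthogonal to $\xi$, resp.\ to $\eta$) and using $\xi^{\perp}\cdot\xi=\eta^{\perp}\cdot\eta=0$ shows that $\xi^{\perp}\cdot\bar w^{\e}$ depends only on $y\cdot\xi$ and $\eta^{\perp}\cdot\bar w^{\e}$ only on $y\cdot\eta$; since $\{\xi^{\perp},\eta^{\perp}\}$ is a basis of $P$ (this is where $\eta\neq\pm\xi$ enters), with dual basis parallel to $\eta$ and $\xi$, expressing $\bar w^{\e}$ in it yields $\bar w^{\e}(y)=\alpha_1^{\e}(y\cdot\xi)\,\eta+\alpha_2^{\e}(y\cdot\eta)\,\xi$ with $\alpha_i^{\e}\in C^{\infty}(\R)$. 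Case (ii) ($\eta=\pm\xi$), where the only surviving component of $\E w$ is $(\E w)_{11}$, is the simpler situation (a one‑dimensional wave plus a Killing field) and gives $w^{\e}=\alpha^{\e}(y\cdot\eta)\,\eta+L^{\e}y+v^{\e}$.

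\emph{Passing to the limit.} Fix $L^{\e}$ as a smooth‑cutoff average of $\nabla w^{\e}$ (which, integrated by parts, is a continuous linear functional of $w^{\e}$ on $L^1_{loc}$, hence $L^{\e}\to L$) and $v^{\e}$ as a weighted average of $w^{\e}-L^{\e}y$ (hence $v^{\e}\to v$); with these choices the residual $\bar w^{\e}:=w^{\e}-L^{\e}y-v^{\e}$ is still of the form $\alpha_1^{\e}(y\cdot\xi)\,\eta+\alpha_2^{\e}(y\cdot\eta)\,\xi$ (the gap between this $L^{\e}$ and the skew one of the previous step is itself a linear map of that very form) and converges in $L^1_{loc}(\R^n)$ to $\bar w:=w-Ly-v$. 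Since each $\alpha_i^{\e}$ appears as a component of $\bar w^{\e}$ that is a function of a single scalar variable ($y\cdot\xi$ or $y\cdot\eta$), this convergence forces $\alpha_i^{\e}\to\alpha_i$ in $L^1_{loc}(\R)$, so $\bar w=\alpha_1(y\cdot\xi)\,\eta+\alpha_2(y\cdot\eta)\,\xi$ with, a priori, $\alpha_i\in L^1_{loc}(\R)$; absorbing into the profiles the (necessarily $\eta\odot\xi$‑proportional) symmetric part of the limiting linear map, one may take $L\in\mathbb{M}_{skew}^{n\times n}$. Finally $\bar w\in\BD_{loc}(\R^n)$ by hypothesis, so $\E\bar w=[\alpha_1'(y\cdot\xi)+\alpha_2'(y\cdot\eta)]\,\eta\odot\xi$ is a locally finite measure; testing it against a function of $y\cdot\xi$ alone, times a cutoff in the complementary directions, exhibits $\alpha_1'$ as a locally finite measure, and symmetrically $\alpha_2'$, so $\alpha_1,\alpha_2\in\BV_{loc}(\R)$; the same argument gives $\alpha\in\BV_{loc}(\R)$ in case (ii).

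\emph{Main obstacle.} The conceptual skeleton is short and the rest is bookkeeping in the adapted basis; the delicate point is the passage to the limit, where the decomposition is non‑unique, so the affine part $L^{\e}y+v^{\e}$ must be pinned down by explicit bounded formulas before taking limits (a large linear part of $\alpha_1^{\e}$ may cancel against $\alpha_2^{\e}$ inside $w^{\e}$ and be absorbed by $L^{\e}$, so a naive extraction of the profiles need not be bounded). The remaining ingredients are standard: the smooth rigidity ``$\E v=0\Rightarrow v$ affine with skew gradient'' and the slicing identity $\partial_{\zeta}(\zeta\cdot v)=\zeta^{T}(\E v)\zeta$. Alternatively one may avoid mollification and argue directly in $\BD_{loc}$ through the fine slicing theory of $\BD$ of Ambrosio--Coscia--Dal Maso \cite{ambrosio1997fine}, which this rigidity statement is precisely designed to exploit.
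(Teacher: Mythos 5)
The paper itself does not prove Proposition \ref{prop:rindlerchar}; it imports it from \cite[Theorem~2.10]{de2019fine} (see also \cite[Proposition~3.9]{caroccia2019integral}), so your argument must stand on its own. Its skeleton is the right one and most of it checks out: mollifying preserves the constant-polar structure, so $\E w^{\varepsilon}=A\mu^{\varepsilon}$ with $\mu^{\varepsilon}\geq 0$ smooth; the planar step with $\zeta=\xi^{\perp},\eta^{\perp}$ is correct (and uses $\eta\neq\pm\xi$ exactly where it should); your handling of the limit is careful on the genuinely delicate point, since pinning $L^{\varepsilon},v^{\varepsilon}$ by fixed $L^1_{loc}$-continuous functionals works, the discrepancy with the skew matrix of the structure step stays in $\mathrm{span}\{\eta\otimes\xi,\xi\otimes\eta\}$, hence the residual keeps the profile form and the profiles converge in $L^1_{loc}(\R)$; and the final test-function argument does show $\alpha_i'$ is a locally finite measure, because the cross term, paired with a fixed cutoff in the complementary variable, only contributes a fixed constant times $\int\phi$.

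The genuine gap is the one sentence that carries the whole rigidity: that slice rigidity on $P^{\perp}$, the vanishing of the mixed components, and ``positivity of $\mu^{\varepsilon}$ ruling out polynomial growth'' force $w^{\varepsilon}=\bar w^{\varepsilon}(y')+L^{\varepsilon}y+v^{\varepsilon}$ with $\bar w^{\varepsilon}$ $P$-valued. As written this is an assertion, not a proof, and it is precisely where the sign of $\mu^{\varepsilon}$ is indispensable: with a signed density the claim (and the proposition) is false, e.g. $w=(y_1y_3,\,0,\,-y_1^2/2)$ satisfies $\E w=(e_1\odot e_1)\,y_3$, all components outside the block vanish, yet $w$ is not a profile in $y_1$ plus a skew-affine field. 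The step can be completed, but it needs an actual computation, for instance: writing $y=(y',y'')$, the relation $\E_{y''}w''=0$ gives $w''(y',y'')=S(y')y''+c(y')$ with $S(y')$ skew; the mixed relations $\partial_m w_i=-\partial_i w_m$ combined with Schwarz symmetry force $\partial_i S\equiv 0$, so $S$ is constant and $w_i(y',y'')=w_i(y',0)-\sum_m\partial_i c_m(y')\,y''_m$; inserting this into the block relations shows that, for each fixed $y'$, $A_{ij}\mu^{\varepsilon}$ is affine in $y''$, and nonnegativity of $\mu^{\varepsilon}$ on all of $\R^n$ kills the linear part, i.e. $\partial_{y''}\mu^{\varepsilon}=0$ and $\partial_{ij}c_m=0$; hence each $c_m$ is affine and everything reassembles into a skew-affine map plus a $P$-valued function of $y'$ (with the obvious one-dimensional analogue in case (ii)). Note that this is where positivity is consumed: once the reduction is done, your planar argument needs no sign condition at all. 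With that paragraph inserted, your proof is complete and is essentially the same mollification-plus-rigidity route as the cited proof of \cite{de2019fine}, carried out with slicing identities instead of the second-order (Saint-Venant) computation used there.
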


We then recall the following particular kernel projection (cf. \cite[Lemma 3.5, Proposition 3.6] {caroccia2019integral}, that is classical when $K=B$ is the ball (see \cite{ambrosio1997fine,kohn1980new}). We recall that 
\[
 \mathrm{Ker}(\mathcal{E}:=\{ z(y):= \mathrm{L}y+\mathrm{b} \ | \ \mathrm{L}\in \mathbb{M}_{skew}^{n\times n}, \ \mathbb{b}\in \R^n\},
\]

\begin{lemma}\label{l:auxiliary}
Let $K$ be a center-symmetric convex set (i.e. $K=-K$). For $u\in \BD(K)$ we define $\mathcal{R}_K[u]$ to be the affine map
\[
\mathcal{R}_K[u](y) :=\mathrm{L}_K[u]y+\mathrm{b}_K[u]
\]
with
\begin{align*}
\mathrm{L}_K[u]&:=\frac{1}{2\L^{n}(K)}\int_{\partial K} \big(u \otimes \nu_{\partial K} - \nu_{\partial K} \otimes u\big)\d\H^{n-1}\\
\mathrm{b}_K[u]&:=\frac{1}{\H^{n-1}(\partial K)}\int_{\partial K} u\d\H^{n-1}.
\end{align*}
Then $\mathcal{R}_K:\BD(K)\rightarrow\mathrm{Ker}(\mathcal{E})$ extends to a linear, bounded functional on $L^1(K;\R^n)$ with values in $\mathrm{Ker}(\mathcal{E})$ and $\mathcal{R}_K[p]=p$ for all $p\in \mathrm{Ker}(\mathcal{E})$.
\end{lemma}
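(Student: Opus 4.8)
The plan is to verify three assertions in turn: that $\mathcal{R}_K$ actually takes values in $\mathrm{Ker}(\mathcal{E})$ and is linear, that it restricts to the identity on $\mathrm{Ker}(\mathcal{E})$, and that it is bounded and extends to $L^1(K;\R^n)$. The first assertion is immediate from the shape of the formulas: for every $u$ the integrand $u\otimes\nu_{\partial K}-\nu_{\partial K}\otimes u$ defining $\mathrm{L}_K[u]$ is pointwise antisymmetric, so $\mathrm{L}_K[u]\in\mathbb{M}^{n\times n}_{skew}$, and hence $y\mapsto\mathrm{L}_K[u]y+\mathrm{b}_K[u]$ is an infinitesimal rigid motion, i.e. precisely an element of $\mathrm{Ker}(\mathcal{E})=\{\mathrm{L}y+\mathrm{b}:\mathrm{L}\in\mathbb{M}^{n\times n}_{skew},\ \mathrm{b}\in\R^n\}$; linearity of $u\mapsto(\mathrm{L}_K[u],\mathrm{b}_K[u])$ is clear, and since elements of $\mathrm{Ker}(\mathcal{E})$ are affine on the bounded set $K$ they automatically lie in $L^1(K;\R^n)$.

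For the reproduction property I would plug $p(y)=\mathrm{L}y+\mathrm{v}$ (with $\mathrm{L}$ skew, $\mathrm{v}\in\R^n$) into the two formulas and use $K=-K$ together with the elementary divergence-theorem identities $\int_{\partial K}\nu_{\partial K}\d\H^{n-1}=0$ and $\int_{\partial K}y\otimes\nu_{\partial K}\d\H^{n-1}=\L^n(K)\,\mathrm{Id}$. Since $y\mapsto\mathrm{L}y$ is odd and $\partial K$ is symmetric, $\int_{\partial K}\mathrm{L}y\d\H^{n-1}=0$, so $\mathrm{b}_K[p]=\mathrm{v}$. Splitting $p\otimes\nu_{\partial K}-\nu_{\partial K}\otimes p$ into its constant and its linear-in-$y$ parts: the constant part contributes $\mathrm{v}\otimes\int_{\partial K}\nu_{\partial K}\d\H^{n-1}$ minus its transpose, hence $0$; the linear part contributes $\mathrm{L}\,\L^n(K)$ from $(\mathrm{L}y)\otimes\nu_{\partial K}$ and $\mathrm{L}^{T}\L^n(K)=-\mathrm{L}\,\L^n(K)$ from $\nu_{\partial K}\otimes(\mathrm{L}y)$. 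Dividing by $2\L^n(K)$ yields $\mathrm{L}_K[p]=\mathrm{L}$, hence $\mathcal{R}_K[p]=p$; in particular $\mathcal{R}_K\circ\mathcal{R}_K=\mathcal{R}_K$, so $\mathcal{R}_K$ is a projection onto $\mathrm{Ker}(\mathcal{E})$.

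For boundedness and the extension to $L^1$ I would invoke the trace theory for maps of bounded deformation recalled in Subsection \ref{sbs:structure}: each $u\in\BD(K)$ has a trace $u|_{\partial K}\in L^1(\partial K;\H^{n-1})$ depending linearly, boundedly in the $\BD$-norm, and strictly continuously on $u$; since $\mathrm{L}_K$ and $\mathrm{b}_K$ are linear functionals of this trace with $L^\infty$ coefficients, $\mathcal{R}_K\colon\BD(K)\to\mathrm{Ker}(\mathcal{E})$ is linear, bounded, and strictly continuous, which is what is used in the iterated blow-ups. To obtain the bounded extension to $L^1(K;\R^n)$ I would rewrite the boundary integrals by the Gauss--Green formula of Subsection \ref{sbs:structure}: for smooth symmetric matrix fields $\Phi$ on $\overline{K}$ with a prescribed normal trace $\Phi\nu_{\partial K}$ on $\partial K$ one has $\int_{\partial K}(u\odot\nu_{\partial K}):\Phi\d\H^{n-1}=\int_{\overline{K}}\Phi:\d\mathcal{E}u+\int_K(\mathrm{div}\,\Phi)\cdot u\d x$, which displays $\mathcal{R}_K[u]$ as the sum of a term controlled by $\|u\|_{L^1(K)}$ and a term controlled by $|\mathcal{E}u|(\overline{K})$; assembling these as in \cite[Lemma~3.5, Proposition~3.6]{caroccia2019integral} produces the asserted bounded operator on $L^1(K;\R^n)$ that still fixes $\mathrm{Ker}(\mathcal{E})$.

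I expect the third step to be the only genuinely delicate point, because the boundary integrals in the definition of $\mathcal{R}_K$ are not a priori controlled by $\|u\|_{L^1(K)}$ alone, so the extension really relies on the trace theorem together with the Gauss--Green reformulation, and one must follow the construction of \cite{caroccia2019integral} with some care. The first two steps are routine once $K=-K$ and the two divergence-theorem identities above are in place.
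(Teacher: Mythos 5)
Your first two steps are correct and constitute the routine part of the statement: the integrand defining $\mathrm{L}_K$ is pointwise skew, and the identities $\int_{\partial K}\nu_{\partial K}\,\d\H^{n-1}=0$ and $\int_{\partial K}y\otimes\nu_{\partial K}\,\d\H^{n-1}=\L^n(K)\,\Id$, together with $K=-K$ (which kills $\int_{\partial K}y\,\d\H^{n-1}$), give $\mathcal{R}_K[p]=p$ on $\mathrm{Ker}(\mathcal{E})$; this is exactly the computation behind \eqref{skew}. Bear in mind, though, that the paper gives no proof of this lemma at all: it is recalled from \cite[Lemma 3.5, Proposition 3.6]{caroccia2019integral} and said to be classical for $K=B$, so in a self-contained proof the whole weight falls on your third step.

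That third step, as you propose it, does not deliver the claim. The Gauss--Green rewriting controls the boundary integrals by $\|u\|_{L^1(K)}+|\E u|(\overline K)$, i.e.\ by the $\BD$-norm; this only reproves boundedness of $\mathcal{R}_K$ on $\BD(K)$ (which the trace theorem already gave you) and is not an $L^1$-bound, and the passage from the one to the other is precisely the nontrivial assertion, which you end up deferring to the citation. In fact no estimate of the boundary formulas can supply it: taking $c\in\R^n$ fixed and $\chi_k$ a smooth cutoff equal to $1$ on a $1/k$-neighbourhood of $\partial K$ and vanishing outside a $2/k$-neighbourhood, the maps $u_k:=\chi_k c$ satisfy $\|u_k\|_{L^1(K)}\to 0$ while $\mathrm{b}_K[u_k]=c$ for every $k$ (and $u_k:=\chi_k\mathrm{W}x$ with $\mathrm{W}$ skew does the same for $\mathrm{L}_K$), so the boundary-trace functionals are not controlled by the $L^1$-norm on $\BD(K)$ and cannot be extended to $L^1(K;\R^n)$ by density from these formulas. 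The $L^1$-bounded projection asserted in the lemma must therefore come from a different representation: classically (for the ball, \cite{ambrosio1997fine,kohn1980new}) one expresses the projection onto the finite-dimensional space $\mathrm{Ker}(\mathcal{E})$ through interior integrals of $u$ against fixed bounded kernels (moments of $u$ over $K$ with a weight vanishing on $\partial K$, or an $L^2(K)$-orthonormal basis of infinitesimal rigid motions), for which $L^1$-boundedness and the reproduction property are immediate, and one then matches this operator with the boundary expressions only on the affine and one-dimensional competitors where they are actually used (as in the proof of Theorem \ref{thm:blupSel}); this, not an estimate of the trace integrals, is what the cited construction carries out. A smaller defect in the same step: testing with symmetric fields $\Phi$ only reaches the symmetric moment $u\odot\nu_{\partial K}$, whereas $\mathrm{L}_K$ is the antisymmetric moment $u\otimes\nu_{\partial K}-\nu_{\partial K}\otimes u$, so even the $\BD$-norm bound for $\mathrm{L}_K$ requires a different integration by parts than the identity you wrote.
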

In particular $\mathcal{R}$ can be used to define the blow-ups, and in the Poincaré inequality \eqref{Poincare}.\\
Note that for an affine function and for a center-symmetric convex set $K$, one has, by a simple integration by parts,
\begin{align}
\mathcal{R}_K[Ax+b](y)=\frac{\left(A-A^T\right)}{2}y+b\label{skew}
\end{align}

With these notion retrieved we can now prove the blow-up selection principle Theorem. We first introduce the Definition of \textit{totally singular points} that will make the statement more clean.
 
\begin{definition}[Totally singular points for $\BD$ maps]\label{def:TSpoinBD}
For $u\in \BD(\Omega)$, consider the blow-up sequences
\[
	u_{K,\e,x}(y):=\frac{u(x+\e  y)- \mathcal{R}_{K}[u(x+\e\cdot)](y)}{\frac{|\E u|(K_{\e  }(x))}{|K|\e^{n-1}}}.
	\]
A point $x\in\Om$ is said to be a \textit{totally singular point} for $u\in \BD(\Om)$ if  for any $L^1$-limit point $p\in \BD(K)$ of $\{u_{K,\e,x} \}_{\e>0}$ it holds $\E p=\E^s p$. We denote by $\mathrm{TS}(u)$ the set of points which are totally singular for $u$. 
 \end{definition}

\begin{theorem}\label{thm:blupSel}
Let $u\in\BD(\Omega)$. Let $K$ be a center-symmetric convex set. Then for $|\E^c u|$-a.e. $x\in\Omega\setminus  \mathrm{TS}  (u)$ there exists a sequence $\e_i \downarrow 0$ such that
\[
u_{K,\e_i,x}(y)\rightarrow \frac{\d \E u}{\d |\E u|}(x)y \ \ \text{strictly in $\BD(K)$ }
\]
\end{theorem}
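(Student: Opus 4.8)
The plan is to mirror the structure of the proof of Theorem~\ref{thm:mainBV}, but now carrying the extra bookkeeping forced by the fact that $\mathrm{Ker}(\E)$ contains all infinitesimal rotations $\mathrm{L}y+\mathrm{b}$ rather than just constants. First I would fix a Cantor point $x$ which is simultaneously: a point where $\frac{\d \E u}{\d|\E u|}(x)=\frac{\eta\odot\xi}{|\eta\odot\xi|}$ for some unit-ish vectors $\eta,\xi$ (this is $|\E^c u|$-a.e.\ by De~Philippis--Rindler, Subsection~\ref{sbs:ann}), a point at which Lemma~\ref{lem:BUnotEmpty} holds, and a point at which Proposition~\ref{lem:BB} is in force for the measure $\E u$. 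Picking $(p,\gamma_p)\in\mathrm{bu}_K(u;x)$, the convergence $\E p=\L^n(K)\gamma_p\llcorner K$ together with $\gamma_p=\frac{\eta\odot\xi}{|\eta\odot\xi|}|\gamma_p|$ shows $p$ satisfies \eqref{e:cantordec}, so the rigidity Proposition~\ref{prop:rindlerchar} applies: in case (i), $p(y)=\alpha_1(y\cdot\xi)\eta+\alpha_2(y\cdot\eta)\xi+\mathrm{L}y+\mathrm{v}$ with $\alpha_i\in\BV_{loc}(\R)$; in case (ii) the analogous one-direction form. Because $x\notin\mathrm{TS}(u)$, among all $L^1$-limit points of $\{u_{K,\e,x}\}$ we may choose $p$ with $\E p\neq\E^s p$, i.e.\ at least one $\alpha_i$ (or $\alpha$) has nonzero approximate derivative on a set of positive $\L^1$-measure.

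Next I would perform a second blow-up of $p$ at a well-chosen point $y\in K$. I would select $y$ so that: $y\in\spt(\E p)$; each of $y\cdot\xi$ and $y\cdot\eta$ is a point of approximate differentiability of $\alpha_1$ resp.\ $\alpha_2$; $y$ is a Lebesgue point of the polar $\frac{\d\E p}{\d|\E p|}$; and $\frac{|\E p|(K_\rho(y))}{\L^n(K_\rho(y))}\to\beta>0$ along the chosen sequence $\rho\downarrow0$. Such $y$ exists on the set where $e(p)\neq0$ (nonempty since not all $\alpha_i'$ vanish). Then $p_{K,\rho_i,y}(z)$, after subtracting $\mathcal{R}_K[p(y+\rho_i\cdot)]$ and rescaling by $\frac{|\E p|(K_{\rho_i}(y))}{|K|\rho_i^{n-1}}$, converges in $L^1(K)$ to an affine map. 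The linear part has symmetrized gradient proportional to $\alpha_1'(y\cdot\xi)\,\xi\odot\eta+\alpha_2'(y\cdot\eta)\,\eta\odot\xi$, so the limit is $g(z)=\kappa\,(\eta\odot\xi)z$ plus an element of $\mathrm{Ker}(\E)$ for some scalar $\kappa$; here the use of \eqref{skew}, namely $\mathcal{R}_K[Az+b]=\tfrac{A-A^T}{2}z+b$, is what guarantees the skew part is absorbed and the surviving linear part is exactly the symmetric matrix $\kappa\,\eta\odot\xi$. Now Proposition~\ref{lem:BB} applies: $(g-\mathcal{R}_K[g],\gamma_g)\in\mathrm{bu}_K(u;x)$, so $g-\mathcal{R}_K[g]=\kappa\,(\eta\odot\xi)z$ is reached as a \emph{strict} limit of some $\{u_{K,\tilde\e_i,x}\}$. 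Finally, strictness forces $|\E(g-\mathcal{R}_K[g])|(K)=1=\L^n(K)^{-1}\cdot\L^n(K)$, i.e.\ $|\kappa|\,|\eta\odot\xi|=1$, pinning $\kappa=\frac{1}{|\eta\odot\xi|}$ (the sign being fixed by $\gamma_g=\frac{\eta\odot\xi}{|\eta\odot\xi|}|\gamma_g|$), whence the limit is exactly $\frac{\eta\odot\xi}{|\eta\odot\xi|}z=\frac{\d\E u}{\d|\E u|}(x)z$.

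The delicate point, and the one requiring the $\eta=\pm\xi$ caveat flagged in step 2.4 of the introduction, is case (ii) of the rigidity Proposition, where $p(y)=\alpha(y\cdot\eta)\eta+\mathrm{L}y+\mathrm{v}$. Here the second blow-up produces $g(z)=\kappa\,(\eta\otimes\eta)z+(\text{skew})z+\text{const}$, and while $\eta\otimes\eta=\eta\odot\eta$ is already symmetric so \eqref{skew} still removes the skew part cleanly, the normalization argument needs a touch more care because the structure of $\mathcal{R}_K$ on this one-dimensional profile must be checked to not reintroduce a spurious linear term; one reapplies the rigidity/$\mathcal{R}$ bookkeeping exactly as in \cite[Proposition 4.6]{caroccia2019integral}. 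The main obstacle overall is not conceptual but organizational: one must verify that the set of "good" base points $x$ (for the first blow-up) and the set of good $y$ (for the second blow-up, inside the fixed limit $p$) are each of full measure for the relevant measures, so that the composition of the two selections lands, via Proposition~\ref{lem:BB}, back at the original point $x$ for $|\E^c u|$-a.e.\ $x$. This is where the hypothesis $x\notin\mathrm{TS}(u)$ is used in an essential way: it is precisely what guarantees the existence of a limit point $p$ for which the set $\{e(p)\neq0\}$ is nonempty, so that a second blow-up at a point of approximate differentiability of $p$ is possible at all.
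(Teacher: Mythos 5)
Your proposal follows essentially the same route as the paper's proof: use $x\notin\mathrm{TS}(u)$ to pick an $L^1$-limit point with nonvanishing absolutely continuous part, apply the rigidity Proposition~\ref{prop:rindlerchar}, perform a second blow-up at a well-chosen point $y$ (approximate differentiability of the one-dimensional profiles, Lebesgue point of the polar, positive density), use the explicit form of $\mathcal{R}_K$ and \eqref{skew} to identify the limit as $\kappa(\eta\odot\xi)z$ modulo $\mathrm{Ker}(\E)$, transfer it back to $x$ via Proposition~\ref{lem:BB}, and fix $\kappa=\frac{1}{|\eta\odot\xi|}$ by strict convergence. Your explicit handling of the $\eta=\pm\xi$ case and of the sign of $\kappa$ is slightly more careful than the paper's write-up, but it is the same argument.
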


\begin{proof}[Proof of Theorem \ref{thm:blupSel}]
Since $x$ is fixed for the rest of the proof, we simply denote by $\eta,\xi$ the vectors of the polar. We prove the statement at any $x\in \Omega\setminus\mathrm{TS}(u)$ such that Proposition  \ref{lem:BB} is in force (which are $|\E^cu|$-almost all $ x\in \Omega $). Since $x\notin \mathrm{TS}(u)$ then there exists an $L^1$ limit point $h$ of $\{u_{K,\e_i,x}\}_{i\in \N}$ such that $e(h)\L^n \neq 0$. Because of Proposition \ref{prop:rindlerchar} we have
 \[
 h(y)=\a_1(y\cdot \eta) \xi + \a_2(y\cdot \xi)\eta + \mathrm{L}y + \mathrm{v}
 \]
 for some $\a_i\in \BV_{\loc}(\R),$ $\mathrm{L}\in \mathbb{M}_{skew}^{n\times n}, \ v\in \R^n$. Since $e(u)\neq 0$ it follows that $\a_1'(y\cdot \eta) + \a_2'(y\cdot \xi)\neq 0$. 
Set $\Psi_1(y):= \alpha_1(y\cdot \eta) \xi, \Psi_2(y):=\a_2(y\cdot \xi)\eta $ and let us now select $y\in K$ satisfying 
 	\begin{enumerate}
 	\item[a)] $y \in  \spt(\E h)$ with $y\cdot \eta$, $y\cdot \xi$ a point of approximate differentiability (respectively) for $\a_1,\a_2$ and a Lebesgue point for $y\mapsto \frac{\d \E h}{\d|\E h|}(y)$;
 	\item[b)] $\frac{\Psi_1(y+\rho \cdot ) - (\Psi_1)_{\partial K_{\rho}(y)} }{\rho}\rightarrow   \beta_1^{(1)} \xi \otimes \eta+c_1$ in $L^1(K;\R^n)$ for some $\beta_1^{(1)}\in\R$, $c\in \R^n$; 
    \item[c)] $\frac{\Psi_2(y+\rho \cdot ) - (\Psi_2)_{\partial K_{\rho}(y)} }{\rho}\rightarrow   \beta_1^{(2)} \eta\otimes \xi +c_2$ in $L^1(K;\R^n)$ for some $\beta_1^{(2)}\in\R$, $c\in \R^n$;    
 	\item[d)] $\frac{D\Psi_1 (K_{\rho}(y))}{\L^n(K)  \rho^n}\rightarrow \beta_2^{(1)} \xi\otimes \eta$;
    \item[e)]$\frac{D\Psi_2 (K_{\rho}(y))}{\L^n(K)  \rho^n}\rightarrow \beta_2^{(2)} \eta\otimes \xi$;
 	\item[d)] $\frac{|\E h|(K_{\rho}(y))}{\L^n(K_{\rho}(y))}\rightarrow  \beta_3>0$.
 	\end{enumerate}
 Since $x\notin \mathrm{TS}(u)$ (and since $e(h)=(\Psi_1' + \Psi_2')\eta\odot  \xi\neq0$) we can guarantee that, for a set of positive $|\E h|$ measure we have $\beta_3 = \beta_2^{(1)} +\beta_2^{(2)}\neq  0 $. \\

Consider now $(g,\gamma)\in \mathrm{bu}_K(h;y)$ (which is not empty due to Lemma \ref{lem:BUnotEmpty}) and notice that, as in the proof of \ref{thm:mainBV}, by applying Proposition \ref{lem:BB} we have $(g-\mathcal{R}_K[g],\gamma)\in \mathrm{bu}_K(u;x)$. In particular $g-\mathcal{R}_K[g]$ can be reached as a $\BD$-strict limit point of $\{u_{K,\e,x}\}_{\e>0}$. It is now enough to characterize $g-\mathcal{R}_K[g]$ as a blow-up of $h$.\\

\textit{Characterization of the blow-up $g$:} Notice that
	\begin{align*}
	h_{K,\rho_i,y}(z)=\frac{\Psi_1(y+\rho_i z)  +  \Psi_2(y+\rho_i z)   - \mathcal{R}_K[h(y+\rho_i\cdot) ](z)}{\frac{|\E h|(K_{\rho_i}(y))}{|K|\rho_i^{n-1}}} 
	\end{align*}
	and that, by \eqref{skew},
	\begin{align*}
 \mathcal{R}_K[h(y+\rho_i\cdot) ](z)&= \frac{|K|\rho_i^{n-1}}{|\E h|(K_{\rho_i}(y))}\left( \mathcal{R}_K[\Psi_2(y+\rho_i \cdot)](z) +  \mathcal{R}_K\left[\Psi_1(y+\rho_i  \cdot)\right](z)\right)
	\end{align*}
	Moreover
	\begin{align*}
	\mathrm{L}_K[\alpha_1((y+\rho_i \cdot )\cdot \eta )\xi ]&=\frac{1}{2|K|}\int_{\partial K}\alpha_1((y+\rho_i z)\cdot \eta) (\xi \otimes \nu_K- \nu_K\otimes \xi)\d\H^{n-1}(z)\\
	&=\frac{1}{2|K|\rho_i^{n-1}}\int_{\partial K_{\rho_i}(y) }\alpha_1(z\cdot \eta) (\xi \otimes \nu_{K_{\rho_i}(y) }- \nu_{K_{\rho_i}(y) }\otimes \xi)\d\H^{n-1}(z)\\
	&=\frac{\rho_i}{2|K_{\rho_i}(y)|}\left[D\Psi_1 (K_{\rho_i}(y))- D\Psi_1 (K_{\rho_i}(y))^t\right]
	\end{align*}
    and analogously
   \begin{align*}
	\mathrm{L}_K[\alpha_2((y+\rho_i \cdot )\cdot \xi )\eta ]&=  \frac{\rho_i}{2|K_{\rho_i}(y)|}\left[D\Psi_2 (K_{\rho_i}(y))- D\Psi_2 (K_{\rho_i}(y))^t\right]
	\end{align*} 
	Thence 
	\begin{align*}
	h_{K,\rho_i,y}(z)=&\frac{|K| \rho_i^{n-1}}{|\E h| (K_{\rho_i}(y))}\left[ \Psi_1(y+\rho_i z)- (\Psi_1)_{\partial K_{\rho_i}(y)} - \frac{\rho_i}{2|K_{\rho_i}(y)|}\left[D\Psi_1 (K_{\rho_i}(y))- D\Psi_1 (K_{\rho_i}(y))^t\right] z\right.\\
 &\left. + \Psi_2(y+\rho_i z)- (\Psi_2)_{\partial K_{\rho_i}(y)} - \frac{\rho_i}{2|K_{\rho_i}(y)|}\left[D\Psi_2 (K_{\rho_i}(y))- D\Psi_2 (K_{\rho_i}(y))^t\right] z
 \right]
	\end{align*}
First we see that, thanks to our choice of $y$ we have 
	\begin{align*}
\frac{\frac{\rho_i}{2|K_{\rho_i}(y)|}\left[D\Psi_j (K_{\rho_i}(y))- D\Psi_j (K_{\rho_i}(y))^t\right]}{ \frac{|\E h|(K_{\rho_i}(y))}{|K|\rho_i^{n-1}} } &\rightarrow  \frac{\beta_2^{(j)}}{2\beta_3}\left(\xi\otimes \eta- \eta\otimes \xi \right) \ \ \ \ \text{(because of hypothesis d), e), f) )}.
	\end{align*}
Also that, because of hypothesis b), c)
	\[
	\frac{\Psi_j(y+\rho_i z) - (\Psi_j)_{\partial K_{\rho_i}(y)}}{\rho_i} \rightarrow \beta_1^{(j)}( \xi \otimes \eta ) z + c
	\]
in $L^1$ and thus
	\begin{align*}
	\frac{\Psi_j(y+\rho_i z) - (\Psi_j)_{\partial K_{\rho_i}(y)}}{\frac{|\E h|(K_{\rho_i}(y) ) }{|K|\rho_i^{n-1} }} \rightarrow \frac{\beta_1^{(j)}}{\beta_3}( \xi \otimes \eta )z + \frac{c_j}{\beta_3}
	\end{align*}
in $L^1(K;\R^n)$. Thence we conclude that
	\[
	h_{K,\rho_i,y}\rightarrow \kappa_1 (\eta\otimes \xi)z +\kappa_2( \xi\otimes \eta)z+C
	\]
in $L^1(K;\R^n)$ and for some $\kappa_1,\kappa_2\in \R$, $C\in \R^n$.  Since $h_{K,\rho_i,y}\rightarrow g$ strictly in $\BD(K)$ we conclude  
	\[
	g(z)= \kappa_1 (\eta\otimes \xi)z +\kappa_2( \xi\otimes \eta)z+C.
	\]
We now know that $g-\mathcal{R}_K[g]$ is also a $\BD$-strict limit point of $\{u_{K,\e,x}\}_{\e>0}$. In particular, setting $\mathcal{R}_K[g](z)=Lz+b$ for some $L\in\mathbb{M}^{n\times n}_{skew}$, by \eqref{skew}, we see that
	\[
 \frac{\kappa_1-\kappa_2}{2}\left(\eta\otimes \xi-\xi \otimes \eta\right)-L=0, \ \ \ C-b=0
	\]
which means $C=b$ and 
	\[
		L=\frac{\kappa_1-\kappa_2}{2}\left(\eta\otimes \xi-\xi \otimes \eta\right).
	\]
Thus
	\[
	g(z)-\mathcal{R}_K[g](z)=\kappa (\eta\odot \xi)z
	\]
with $ \kappa= \frac{\kappa_1+\kappa_2}{2}$. We now combine this information with the fact that $|\E g|(K)=\L^n(K)$, again  implied by the strict convergence,  to obtain
	\[
	\L^n(K)=|\E g|(K)=\kappa|\eta\odot \xi| \L^n(K) \ \ \Rightarrow \ \ \kappa=\frac{1}{|\eta\odot \xi|}
	\]
achieving 
	\[
g(z)-\mathcal{R}_K[g](z)=\left(\frac{\eta\odot \xi}{|\eta\odot \xi|}\right)z=\frac{\d \E u}{\d|\E u|}(x) z.
	\]
\end{proof}
\begin{remark}
Notice that $x\notin \mathrm{TS}(u)$ is crucial in the above argument since a further linearization through an additional blow-up can be performed only if we can find a $\BD$ limit point $h$ with $\E h \neq \E^s h$. 
\end{remark}


\textbf{Acknowledgments:} MC thanks the financial support of PRIN 2022R537CS "Nodal optimization, nonlinear elliptic equations, nonlocal geometric problems, with a focus on regularity" funded by the European Union under Next Generation EU.  NVG was supported by the FCT  project UIDB/04561/2020. The authors are deeply grateful to F. Gmeineder for the content of Proposition \ref{prop:sobBalls}.

\bibliography{references}
\bibliographystyle{plain}

\end{document}